\newtheorem{theorem}{Theorem}
\newtheorem{lemma}[theorem]{Lemma}
\newtheorem{proposition}[theorem]{Proposition}
\newtheorem{coro}[theorem]{Corollary}
\theoremstyle{definition}
\newcommand{\tf}[1]{\mathbf{#1}}
\newcommand{\Ah}{\widehat{A}}
\newcommand{\Ahat}{\Ah}
\newcommand{\Bhat}{\Bh}
\newcommand{\Bh}{\widehat{B}}
\newcommand{\Kh}{\widehat{\tf K}}
\newcommand{\Phixh}{\widehat{\tf \Phi}_x}
\newcommand{\Phixs}{\tf \Phi_x^\star}
\newcommand{\Phixc}{\tf \Phi_x^c}
\newcommand{\Phiuh}{\widehat{\tf \Phi}_u}
\newcommand{\Phius}{\tf \Phi_u^\star}
\newcommand{\Phiuc}{\tf \Phi_u^c}
\newcommand{\Phixtilde}{\tilde{\tf \Phi}_x}
\newcommand{\Phiutilde}{\tilde{\tf \Phi}_u}
\newcommand{\Dh}{\widehat{\tf{\Delta}}}
\newcommand{\trueA}{A_\star }
\newcommand{\trueB}{B_\star }
\newcommand{\RHinf}{\mathcal{RH}_\infty}
\DeclareMathOperator*{\Tr}{\mathbf{Tr}}
\DeclareMathOperator*{\argmin}{argmin}
\newcommand{\norm}[1]{\lVert #1 \rVert}
\newcommand{\bignorm}[1]{\left\lVert #1 \right\rVert}
\newcommand{\twonorm}[1]{\lVert #1 \rVert_{2}}
\newcommand{\ip}[2]{\ensuremath{\langle #1, #2 \rangle}}
\newcommand{\E}{\mathbb{E}}
\newcommand{\abs}[1]{\ensuremath{| #1 |}}
\renewcommand{\Pr}{\mathbb{P}}
\newcommand{\T}{\top}
\newcommand{\statedim}{n}
\newcommand{\inputdim}{d}
\newcommand{\hinf}{\mathcal{H}_\infty}
\newcommand{\htwo}{\mathcal{H}_2}
\newcommand{\lone}{\mathcal{L}_1}
\newcommand{\infnorm}[1]{\| #1 \|_\infty}
\newcommand{\hinfnorm}[1]{\| #1 \|_{\hinf}}
\newcommand{\htwonorm}[1]{\| #1 \|_{\htwo}}
\newcommand{\lonenorm}[1]{\| #1 \|_{\lone}}
\newcommand{\bightwonorm}[1]{\bignorm{#1}_{\mathcal{H}_2}}
\newcommand{\bighinfnorm}[1]{\bignorm{#1}_{\mathcal{H}_\infty}}
\newcommand{\R}{\mathbb{R}}
\newcommand{\calX}{\mathcal{X}}
\newcommand{\calU}{\mathcal{U}}
\newcommand{\calO}{\mathcal{O}}
\newcommand{\Toep}{\mathrm{Toep}}
\newcommand{\eps}{\varepsilon}
\numberwithin{theorem}{section}
\numberwithin{equation}{section}
\newcommand{\extendedsls}{Appendix~\ref{sec:all_sls_proofs}}
\newcommand{\extendedest}{Appendix~\ref{sec:payley_zygmund_proof}}
\newcommand{\imagewidth}{300px}
\title{\bf
Safely Learning to Control the Constrained Linear Quadratic Regulator
}
\author{Sarah Dean, Stephen Tu, Nikolai Matni, and Benjamin Recht
\thanks{S. Dean, S. Tu, N. Matni, and B. Recht are with the Department of Electrical Engineering and Computer Sciences, University of California, Berkeley, CA, 94709 USA ({email: dean\_sarah@berkeley.edu, stephent@berkeley.edu, nmatni@berkeley.edu, brecht@berkeley.edu})}%
}
\begin{document}

\maketitle
\pagestyle{plain}

\begin{abstract}
We study the constrained linear quadratic regulator with unknown dynamics,
addressing the tension between safety and exploration in data-driven control techniques.
We present a framework which allows for system identification through persistent excitation,
while maintaining safety by guaranteeing the satisfaction of state and input constraints.
This framework involves a novel method for synthesizing robust constraint-satisfying feedback controllers, leveraging newly developed tools from 
\emph{system level synthesis}.
We connect statistical results with cost sub-optimality bounds to give
non-asymptotic guarantees on both estimation and controller performance.
\end{abstract}

\section{Introduction}

Data-driven design has considerable potential in contemporary control systems where precise modeling of the dynamics is intractable, whether due to complex large-scale interactions or nonlinearities resulting from contact forces.
However, a large hurdle in the way of practical deployment is the question of maintaining safe operating conditions during the learning process, and furthermore guaranteeing safe execution using learned components.

Motivated by this issue, we study the data-driven design of a
controller for the \emph{constrained} Linear Quadratic Regulator (LQR) problem.
In constrained LQR, we design a controller
for a potentially unknown linear dynamical system that minimizes a given quadratic
cost, subject to the additional requirement that both the state and input stay within
a specified safe region. This is a problem that has received much attention
within the model predictive control (MPC) community.

For the LQR problem with no constraints, a natural method of exploration for
learning the dynamics is to excite the system by injecting white noise. When
safety is not an issue, this method is effective and recently Dean et
al.~\cite{dean2017sample} provide an end-to-end sample complexity on this
``identify-then-control'' scheme. However, this method of fails to consider safety, and the injected white noise may lead to constraint violation.

%
In this paper, we directly address the tension between exploration for learning
and safety, which are fundamentally at odds. We do this by
synthesizing a controller which simultaneously excites and regulates the system; we propose to learn by {additively} injecting bounded
noise to the {control} inputs computed by a safe controller. By leveraging the recently developed
system level synthesis (SLS) framework for control design~\cite{SysLevelSyn1}, we give
a computationally tractable algorithm which returns a controller that (a)
guarantees the closed loop system remains within the specified constraint set and
(b) ensures that enough noise can be injected into the system to obtain a statistical
guarantee on learning. To the best of our knowledge, our algorithm is the first
to simultaneously achieve both objectives. 
Furthermore, the controller synthesis is solved by a convex optimization problem whose feasibility is a certificate of safety, and no considerations of robust invariant sets are required. 

Our second contribution is to provide a sub-optimality bound on control
performance for constrained LQR. Using the same SLS framework, we quantify the
excess cost incurred by playing a controller designed on the uncertain dynamics
obtained from learning.
The sub-optimality depends on both the size of the uncertainty sets and a
type of constraint robustness cost gap of the optimal
constrained controller for the true system.  This allows us to provide the
first end-to-end sample complexity guarantee for the control of constrained
systems.




Estimation and control of the unconstrained LQR problem has been studied in the non-asymptotic setting~\cite{fiechter1997pac, dean2017sample}. However, the identification schemes rely on pure excitation and {system restarts}, which is not suitable for the constrained setting. 
On the other hand, the online learning literature simultaneously considers learning and control, where strategies are based on \emph{optimism in the face of uncertainty} \cite{abbasi2011regret} or \emph{Thompson Sampling} \cite{abeille17}. These approaches guarantee system estimation only up to optimal closed-loop equivalence, and do not consider safety. Building on a statistical result by Simchowitz et al.~\cite{simchowitz2018learning} which allows for non-asymptotic guarantees on parameter estimation from a single trajectory of a linear system, Dean et al.~\cite{dean2018regret} provide a robust online method that guarantees parameter estimation and stability throughout.

The design of controllers that guarantee robust constraint satisfaction has long been considered in the context of model predictive control~\cite{bemporad1999robust}, including methods that model uncertainty in the dynamics directly~\cite{kothare1996robust}, or model it as a bounded state disturbance for computational efficiency~\cite{mayne2005robust,goulart2006optimization}. 
Strategies for incorporating estimation of the dynamics include 
experiment-design inspired costs~\cite{larsson2011mpc},
decoupling learning from constraint satisfaction~\cite{aswani2013provably},
and set-membership methods rather than parameter estimation~\cite{tanaskovic2014adaptive,lorenzen2017adaptive}.
Due to the receding horizon nature of model predictive controllers, this literature relies on set invariance theory for infinite horizon guarantees~\cite{blanchini99}. Our framework considers the infinite horizon problem directly, and therefore we do not require computation of invariant sets.

Finally, the machine-learning community has begun to consider safety in
reinforcement learning, where much work positions itself as being for general
dynamical systems in lieu of providing statistical guarantees~\cite{berkenkamp15,berkenkamp2017safe,dalal18,chow18}. Some works assume the existence of an initial safe controller for learning
\cite{koller2018learning}, and 
robust MPC methods have been proposed to modify potentially unsafe learning inputs~\cite{wabersich2018linear}.
Our framework gives an alternative procedure for designing such a controller using coarse system estimates.  
Most similar
to this work is that of Lu et al.~\cite{lu2017safe}, who propose a method to
allow excitation on top of a safe controller, but consider only finite-time
safety and require
non-convex optimization to obtain formal guarantees. 

We fix an underlying linear dynamical system 
with full state observation,
\begin{align}\label{eq:dynamics}
x_{k+1} = \trueA x_k + \trueB u_k + w_k\:,
\end{align}
where we have initial condition $x_0 \in \R^\statedim$, sequence of inputs $\{u_0,u_1,\dots\} \subseteq \R^\inputdim$, and
disturbance process $\{w_0, w_1,\dots\} \subseteq \R^\statedim$. 
Later, we will specify assumptions on the disturbances, including independence over time, bounded second moment, and bounded $\ell_\infty$ norm.
The dynamics matrices $(\trueA, \trueB)$ are unknown.
We denote the errors on estimates of the system $(\Ahat,\Bhat)$ in terms of the $\ell_p\to\ell_p$ matrix operator norm as
\[\|\trueA-\Ahat\|_p:=\|\Delta_A\|_p:=\eps_{A,p},~~\|\trueB-\Bhat\|_p:=\|\Delta_B\|_p:=\eps_{B,p},\] 
In this work, we will focus on the errors for $p=2$ and $p=\infty$.

Furthermore, we assume some prior knowledge on the system dynamics,
in the form of initial estimates $(\Ahat_0, \Bhat_0)$ and uncertainty measures
$(\eps_{A,p}^0,\eps_{B,p}^0)$. 
Without such knowledge, guaranteeing safety in any form would be impossible.
We note that the initial estimates
may be coarse grained, and the goal of the learning procedure will be to refine this uncertainty prior to optimal control design.

The constrained LQR optimal control problem seeks to minimize the expected quadratic cost, subject to constraints on the state and input.
Before further detailing this objective, we develop notation and background on system level synthesis.

\subsection{System Level Synthesis}
Many approaches to optimal control for systems with constraints involve receding horizon control, where an \emph{open loop} finite-time trajectory is computed at each timestep; indeed, parameterizing optimal control problems by a state feedback controller generally leads to nonconvex optimization. As a motivating example, consider the static feedback law $u_k=K x_k$ applied to the linear system~\eqref{eq:dynamics}. Then we can write
\[x_k = \sum_{t=1}^k (\trueA+\trueB K)^{t} w_{k-t} + (\trueA+\trueB K)^{k}x_0\:,\]
and a similar expression for $u_k$. Then it is clear that convex constraints on $x_k$ and $u_k$ will be non-convex in $K$.
Instead, we can parametrize the problem in terms of convolution with the closed-loop system response,
\begin{align}\label{eq:phis}
x_k = \sum_{t=1}^{k+1} \Phi_x(t) w_{k-t}, \:u_k = \sum_{t=1}^{k+1} \Phi_u(t) w_{k-t}
\end{align}
where we defined $w_{-1}=x_0$ the fixed initial condition. 
We note that the expression above is valid for any linear dynamic controller, i.e. any controller which is a linear function of the state and its history.
Since the relation in~\eqref{eq:phis} is linear, convex constraints on state and input translate to convex constraints on the system response elements. 
The \emph{system level synthesis} (SLS) framework shows that for any elements $\{\Phi_x(t),\Phi_u(t)\}$ constrained to obey, for all $k \geq 1$,
\begin{equation*}
\Phi_x(k+1) = \trueA \Phi_x(k) + \trueB \Phi_u(k) \:, \:\: \Phi_x(1) = I \:,
\end{equation*}
there exists a feedback controller that achieves the desired system responses~\eqref{eq:phis}. The state-feedback parameterization result in Theorem 1 of Wang et al.~\cite{SysLevelSyn1} formalizes this principle, and therefore any optimal control problem over linear systems can be cast as a constrained optimization problem over system response elements. 
We remark that similar observations have been used for constrained state feedback control in the finite horizon setting~\cite{goulart2006optimization}.

In what follows, we use boldface letters to denote transfer functions and signals, e.g. $\tf{\Phi}_x(z) = \sum_{k = 1}^\infty\Phi_x(k) z^{-k}$ and $\tf{x}(z) = \sum_{k = 0}^\infty x_k z^{-k}$.
Under this notation, the affine constraints can be rewritten as
\begin{equation*}
\begin{bmatrix} zI - \trueA & - \trueB \end{bmatrix} \begin{bmatrix} \tf \Phi_x \\ \tf \Phi_u \end{bmatrix} = I \:,
\end{equation*}
and the corresponding control law $\tf u = \tf K \tf x$ is given by $\tf K = \tf \Phi_u \tf \Phi^{-1}_x$.

\subsection{Notation}

In this paper, we restrict our attention to the function space $\RHinf$, consisting of discrete-time stable
matrix-valued transfer functions. We use $\frac{1}{z} \RHinf$ to denote the set
of transfer functions $\tf G$ such that $z \tf G \in \mathcal{RH}_\infty$. We further define
$$\RHinf(C, \rho)
 := \Big\{ \tf M = \sum_{k=0}^\infty M(k) z^{-k} ~|~ \norm{ M(k)}_2 \leq C \rho^k,~k = 0, 1, 2, ...\Big\}$$
 for transfer functions that satisfy a certain decay rate in the spectral norm of their impulse response elements.

When working with transfer functions and signals, we will denote the coefficient of the term of degree $k$ as $\tf G[k]=G(k)$ and  $\tf x[k]=x_k$. We will also denote $G[k:1]$ as the block row vector of system response elements of $\tf G$
 \[G[k:1] = \begin{bmatrix} G(k) & \dots & G(1) \end{bmatrix}\:. \]

As is standard, we let $\norm{x}_p$ denote the $\ell_p$-norm of a vector $x$.
For a matrix $M$, we let $\norm{M}_{p}$ denote its $\ell_p \to \ell_p$ operator norm. 
We will consider the $\htwo$, $\hinf$, and $\lone$ norms, which are infinite horizon analogs of the Frobenius, spectral, and $\ell_\infty\to\ell_\infty$ operator norms of a matrix, respectively:
$$\htwonorm{\tf M} = {\sqrt{\sum_{k=0}^\infty \norm{ M(k)}_F^2 }},~~
\hinfnorm{\tf M} = \sup_{\twonorm{\tf w}=1} \: \twonorm{\tf M\tf w},~~
\lonenorm{\tf M} = \sup_{\infnorm{\tf w}=1} \: \infnorm{\tf M\tf w}$$
%
%

Finally, for two numbers $a, b$, we let $a \lesssim b$ (resp. $a \gtrsim b$) denote that there exists
an absolute constant $C > 0$ such that $a \leq C b$ (resp. $a \geq C b$).

\subsection{Optimal Control Problem}
We now describe the constrained optimal control problem
that we would want to solve given perfect knowledge of the system dynamics. 
First, we consider the expected infinite horizon quadratic cost for the system $(\trueA, \trueB)$ in feedback with $\tf K$:
\[J(\trueA, \trueB, \tf K)^2 :=\frac{1}{\sigma^2} \lim_{T \to \infty} \frac{1}{T} \sum_{k=0}^{T-1} \E_{w}[ x_k^\T Q x_k + u_k^\T R u_k]\:.\]
We assume that $\{w_k\}$ is any distribution that
satisfies $\E[w_k]=0$ and $\E[w_kw_k^\T] = \sigma^2 I$ and is independent across time, i.e., $w_k\perp w_\ell$ for $\ell\neq k$.
Note that
any distribution satisfying these constraints induces the same expected quadratic cost, so it is unnecessary
to specify a specific distribution.

Next, we consider polytopic constraint sets of the form
\[\calX := \{x~:~F_x x\leq b_x\},~~\calU := \{u~:~F_u u\leq b_u\}\:.\]
We will constrain the state and input trajectories to lie within these sets for any possible disturbance sequence
$\{w_k\}$ satisfying $\norm{w_k}_{\infty} \leq \sigma_w$ for all $k\geq 0$.
Putting the cost and the constraints together, the optimal control problem that acts as our baseline is:
\begin{align}
\begin{split} \label{eq:constrained_lqr_stoch}
  \min_{\tf u \in\mathcal{K}} ~& J(\trueA, \trueB, \tf K)\\
  \text{s.t.}~& x_0~\text{fixed};~~  F_x x_k \leq b_x,~~ F_u u_k \leq b_u \\
    &\qquad\qquad\forall k \:, \forall \{w_k : \norm{w_k}_\infty \leq \sigma_w\} \:.
  \end{split}
\end{align}
Above, we let the set $\mathcal{K}$ enumerate all inputs that result from linear dynamic stabilizing feedback controllers
for $(\trueA, \trueB)$ of the form $\tf u=\tf K \tf x$. This is made possible by the system level synthesis framework described in the previous section. 
%

As we show in the following section, the optimal control problem given in \eqref{eq:constrained_lqr_stoch} is a convex, but infinite-dimensional
problem. It is an idealized baseline to
compare our actual solutions to; our sub-optimality guarantees will be with respect
to the optimal cost achieved by this problem. It 
is a relevant baseline, since it optimizes for average case performance but ensures
safety for the worst-case behavior, consistent with MPC literature~\cite{mayne2005robust,oldewurtel2008tractable}. We remark that an alternative to
\eqref{eq:constrained_lqr_stoch} is to replace the worst case constraint behavior with probabilistic
chance constraints~\cite{farina2016stochastic}. We do not work with chance constraints because they are generally
difficult to directly enforce on an infinite horizon; arguments around recursive feasibility
using robust invariant sets are common in the MPC literature to deal with this issue.

\section{Constraint-Satisfying Control}
We begin by formulating a method for robustly operating a system while maintaining safety. 
First we describe a system level synthesis approach to the constrained LQR problem, and then we propose a modification which makes it robust to uncertainties in system dynamics. 
Finally, we outline a reduction to tractable synthesis via a finite-dimensional optimization problem over a finite impulse response (FIR) approximation.

\subsection{A System Level Approach}
Using the SLS formulation, we define an optimization problem that solves the optimal control problem presented in the section above.

\begin{proposition} \label{prop:sls_equivalence}
The following convex optimization problem solves the optimal control problem~\eqref{eq:constrained_lqr_stoch}.
\begin{align}
\min_{\tf\Phi_x,\tf\Phi_u\in \frac{1}{z} \RHinf} ~ & \left\|\begin{bmatrix}Q^{1/2} & \\ & R^{1/2}\end{bmatrix} \begin{bmatrix}\tf\Phi_x \\ \tf\Phi_u\end{bmatrix}\right\|_{\htwo} \label{eq:sls_lqr}\\
\mathrm{s.t.}~& \begin{bmatrix} zI - \trueA & -\trueB \end{bmatrix} \begin{bmatrix}\tf\Phi_x \\ \tf\Phi_u\end{bmatrix} = I,\notag\\
& G_x(\tf\Phi_x;k) \leq b_x, ~~ G_u(\tf\Phi_u;k) \leq b_u~~\forall ~k\geq 1\:. \notag
\end{align}
where the elements of the constraint functions are defined as
\begin{align*}
\begin{split}
G_x(\tf\Phi_x;k)_j := F_{x,j}^\top \Phi_x(k+1)x_0 +  \sigma_w \|F_{x,j}^\top \Phi_x[k:1]\|_1\:,  \\
 G_u(\tf\Phi_u;k)_j := F_{u,j}^\top \Phi_u(k+1)x_0 +  \sigma_w\|F_{u,j}^\top \Phi_u[k:1]\|_1\:,
\end{split}
\end{align*}
with $j$ indexing the rows of $F_x$ and $F_u$ and entries of $G_x$ and $G_u$.
\end{proposition}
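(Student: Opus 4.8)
The plan is to use the SLS parameterization to rewrite the OCP \eqref{eq:constrained_lqr_stoch} as an optimization directly over the closed-loop responses $(\tf\Phi_x,\tf\Phi_u)$, and then to verify separately that (i) the stochastic cost $J_{\sigma}$ coincides, up to the positive constant $\sigma^2$, with the \emph{square} of the $\htwo$ objective, and (ii) the robust polytopic state/input constraints are exactly the tightened constraints encoded by $G_x$ and $G_u$. By Theorem 1 of Wang et al.~\cite{SysLevelSyn1}, the set $\mathcal{K}$ of stabilizing feedback laws $\tf u = \tf K\tf x$ is in bijection with the set of $(\tf\Phi_x,\tf\Phi_u)\in\frac{1}{z}\RHinf$ satisfying $\begin{bmatrix} zI-\trueA & -\trueB \end{bmatrix}\begin{bmatrix}\tf\Phi_x \\ \tf\Phi_u\end{bmatrix}=I$, and under this bijection the closed-loop signals are given by the convolution \eqref{eq:phis}, i.e.\ $x_k = \Phi_x(k+1)x_0 + \sum_{t=1}^{k}\Phi_x(t)w_{k-t}$ and likewise for $u_k$. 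Convexity of the reformulation is then immediate: the $\htwo$ norm is convex, the dynamics constraint is affine, and each $G_x(\tf\Phi_x;k)_j$ is a linear functional of $\tf\Phi_x$ plus an $\ell_1$ norm of a linear functional of $\tf\Phi_x$, hence convex.

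For the cost I would substitute the convolution expression into $\E_w[x_k^\T Q x_k]$. Using $\E[w_k]=0$ and independence across time, the cross terms vanish and the expectation splits into a deterministic ``bias'' term $x_0^\T\Phi_x(k+1)^\T Q\Phi_x(k+1)x_0$ and a ``variance'' term $\sigma^2\sum_{t=1}^{k}\Tr(Q\Phi_x(t)\Phi_x(t)^\T)$; the input cost decomposes identically with $(R,\tf\Phi_u)$. Taking the running average $\frac1T\sum_{k=0}^{T-1}$, the bias term vanishes in the limit because any finite-$\htwo$ response is square-summable, so $\norm{\Phi_x(k+1)}_2\to0$ and its Ces\`aro mean vanishes, while the variance term is the Ces\`aro mean of the convergent partial sums $\sum_{t=1}^{k}\Tr(Q\Phi_x(t)\Phi_x(t)^\T)$ and hence converges to $\sum_{t=1}^{\infty}\Tr(Q\Phi_x(t)\Phi_x(t)^\T)$. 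Collecting the state and input pieces gives $J_{\sigma}(\trueA,\trueB,\tf K)=\sigma^2\,\htwonorm{\mathrm{diag}(Q^{1/2},R^{1/2})[\tf\Phi_x;\tf\Phi_u]}^2$, so minimizing $J_{\sigma}$ is equivalent to minimizing the $\htwo$ objective.

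For the constraints, I would fix a row index $j$ and a time $k$ and write $F_{x,j}^\T x_k = F_{x,j}^\T\Phi_x(k+1)x_0 + \sum_{t=1}^{k}(F_{x,j}^\T\Phi_x(t))\,w_{k-t}$. The constraint must hold for every admissible realization, so I maximize the right-hand side over $\{w_\ell:\infnorm{w_\ell}\le\sigma_w\}$. Since the disturbances at distinct times are decoupled, the maximum separates across $t$, and the support function of the $\ell_\infty$-ball (Hölder duality) gives $\max_{\infnorm{w}\le\sigma_w}(F_{x,j}^\T\Phi_x(t))w = \sigma_w\onenorm{F_{x,j}^\T\Phi_x(t)}$; summing over $t$ and recognizing the concatenation $\sum_{t=1}^{k}\onenorm{F_{x,j}^\T\Phi_x(t)}=\onenorm{F_{x,j}^\T\Phi_x[k:1]}$ shows that the worst case equals exactly $G_x(\tf\Phi_x;k)_j$. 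Thus robust satisfaction of $F_x x_k\le b_x$ for all $k$ is equivalent to $G_x(\tf\Phi_x;k)\le b_x$ for all $k$, and the input constraints are handled identically. Combining this with the cost equivalence and the SLS bijection shows the feasible sets and the optima of the two problems coincide.

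I anticipate the main obstacle to be the careful justification of the infinite-horizon limit in the cost: one must argue both that the transient contribution of the fixed initial condition is annihilated by the running average and that the variance series converges, which rely on restricting attention to responses of finite $\htwo$ norm (controllers of infinite cost being trivially suboptimal). By contrast, the constraint equivalence is an elementary worst-case duality computation, and the reparameterization of $\mathcal{K}$ is supplied directly by the cited SLS theorem.
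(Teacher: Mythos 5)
Your proposal is correct and follows essentially the same route as the paper: the SLS parameterization of $\mathcal{K}$ via Theorem 1 of Wang et al., the standard LQR-cost-to-$\htwo$-norm equivalence (which the paper cites rather than derives, and which you correctly fill in via the bias/variance decomposition and Ces\`aro limit), and the identical worst-case support-function computation for the robust polytopic constraints.
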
 \vspace{0.5em}
Examining the form of the optimization problem above, we see that the expected quadratic cost transforms into a system $\htwo$ norm, while the worst-case polytopic constraints on state and input become closed-form polytopic constraints on the system response.
For a controller $\tf K =\tf\Phi_u(\tf\Phi_x)^{-1}$, note that the LQR cost is equivalent to
\[J(\trueA,\trueB,\tf K) = \left\|\begin{bmatrix}Q^{1/2} & \\ & R^{1/2}\end{bmatrix} \begin{bmatrix}\tf\Phi_x \\ \tf\Phi_u\end{bmatrix}\right\|_{\htwo}\:.\]
Lastly, we remark here that the feasibility of the convex synthesis problem in~\eqref{prop:sls_equivalence} for an initial condition $x_0$ implies that $x_0$ is a member of a robust control invariant set.

\begin{proof}
By the state-feedback parameterization result in Theorem 1 of~\cite{SysLevelSyn1}, the SLS parametrization encompasses all internally stabilizing state-feedback controllers acting on the true system $(\trueA,\trueB)$.
Thus, it is necessary only to show that the optimization problem in~\eqref{eq:sls_lqr} is consistent with that of~\eqref{eq:constrained_lqr_stoch} under the system level parametrization. The equivalence between the LQR cost and the $\htwo$ system norm is standard and we refer to the Appendix of \cite{dean2017sample}, which presents this reformulation in terms of system responses for Gaussian process noise. The argument applies unchanged to any distribution satisfying our assumptions.

Therefore, it remains to consider the inequality constraints.
Because the constraints must be satisfied robustly, it is equivalent to consider
\begin{align*}
&\max_{\{w_k\}} F_x \sum_{t=1}^{k+1} \Phi_x(t) w_{k-t} \leq b_x
\iff  F_x \Phi_x(k+1)x_0 +  \max_{\{w_k\}} F_x \sum_{t=1}^{k} \Phi_x(t) w_{k-t} \leq b_x\:.
\end{align*}
Then considering elements in the second term with each $j$ indexing the rows of $F_x$,
\begin{align*}
\max_{\{w_k\}} &F_{x,j}^\top \sum_{t=1}^{k} \Phi_x(t) w_{k-t} =  \sum_{t=1}^{k} \max_{\|w\|_\infty\leq \sigma_w}  F_{x,j}^\top \Phi_x(t) w 
=\sum_{t=1}^{k} \sigma_w\|F_{x,j}^\top \Phi_x(t)\|_1 = \sigma_w\|F_{x,j}^\top \Phi_x[k:1]\|_1 \:.
\end{align*}
Thus the inequality constraint on the function $G_x(\Phi_x;k)$ is an equivalent condition.
A similar computation holds for the input constraint.
\end{proof}
{
We note the appearance of the row-wise $\ell_1$ norm over the multiplication of the the system response elements with the constraint matrix.
The expression
can be understood as an analog to the $\ell_\infty\to\ell_\infty$ operator norm, mediated by the shape of the polytope. 
In the next section when we introduce robustness to system dynamics, the $\lone$ system norm will come into play for similar reasons.
}

\subsection{Robust Control} \label{sec:robust_control}
We are further motivated to reformulate the optimal control problem in terms of system responses so that we can transparently consider uncertainties in the dynamics. We are interested in controller synthesis under model errors, where only nominal estimates of the system are known. Suppose that the system responses $\widehat{\tf\Phi}_x$, $\widehat{\tf\Phi}_u$ are designed on the estimated system. Then

\[\begin{bmatrix} zI - \Ahat & -\Bhat \end{bmatrix} \begin{bmatrix}\widehat{\tf\Phi}_x \\ \widehat{\tf\Phi}_u\end{bmatrix} = I\:,\text{and define}~\widehat{\tf\Delta} := \begin{bmatrix} \Delta_A & \Delta_B \end{bmatrix} \begin{bmatrix} \widehat{\tf\Phi}_x \\ \widehat{\tf\Phi}_u \end{bmatrix}\:.\]
The model mismatch impacts the closed-loop behavior of the true system in feedback with the controller $\widehat {\tf K} = \widehat{\tf\Phi}_u(\widehat{\tf\Phi}_x)^{-1}$ in a transparent way. Supposing that $ \|\widehat{\tf\Delta}\|_{\hinf}<1$, it achieves the system response and bounded cost:
\begin{align}\label{eq:delta_perf_bound}
\begin{bmatrix}\widehat{\tf\Phi}_x \\ \widehat{\tf\Phi}_x\end{bmatrix}(I + \widehat{\tf\Delta})^{-1}\:,~~J(\trueA, \trueB, \Kh) \leq \frac{1}{1-\|\widehat{\tf\Delta}\|_{\hinf}}
J(\Ah, \Bh, \Kh)\:.
\end{align}
This follows from the robust stability result in Theorem 2 of Matni et al.~\cite{virtual} and the sub-multiplicativity of the $\htwo$ and $\hinf$ norms. Note that $\|\Dh\|<1$ is a sufficient condition for the existence of the inverse $(I+\Dh)^{-1}$ for any induced norm $\|.\|$ by the small gain theorem. 

This expression provides an upper bound on the cost achieved for a controller designed system estimates which depends only on the system estimates and the size of the error system $\widehat{\tf\Delta}$.
Motivated by this bound, consider the following robust optimization problem:
\begin{align}
\widehat J(\gamma,\tau):=\min_{\tf\Phi_x,\tf\Phi_u\in\frac{1}{z} \RHinf} ~ &\frac{1}{1-\gamma} J(\Ah, \Bh, \tf K) \label{eq:sls_lqr_robust} \\
\mathrm{s.t.}~& \begin{bmatrix} zI - \Ahat & -\Bhat \end{bmatrix} \begin{bmatrix}\tf\Phi_x \\ \tf\Phi_u\end{bmatrix} = I,\notag \\
& \sqrt{2} \left\| \begin{bmatrix}\eps_{A,2}\tf\Phi_x \\ \eps_{B,2}\tf\Phi_u\end{bmatrix}\right\|_{\hinf} \leq \gamma, ~ \left\| \begin{bmatrix}\eps_{A,\infty}\tf\Phi_x \\ \eps_{B,\infty}\tf\Phi_u\end{bmatrix}\right\|_{\lone} \leq \tau, \notag\\
&G_x^\tau(\Phi_x;k) \leq b_x, ~~ G_u^\tau(\Phi_u;k) \leq b_u~~\forall ~k\geq 1\:. \notag
\end{align}
where $0\leq\gamma,\tau\leq 1$ are fixed parameters and for each $j$ and $k$,
\begin{align*}
\begin{split}
G_x^\tau(\tf\Phi_x;k)_j := G_x(\tf\Phi_x;k)_j+\frac{\tau }{1-\tau}\max(\sigma_w,\|x_0\|_{\infty})\|F_{x,j}^\top\Phi_x[k+1:1]\|_1 
\:,
\end{split}
\end{align*}
and similarly for $G_u^\tau(\tf\Phi_u;k)_j$.

In this problem, $\gamma$ bounds the increase in the $\htwo$ cost due to the dynamics uncertainty, while $\tau$ determines the increase in the state and input values with respect to the constraints.
In fact, both values can be related to bounding an enlarge noise process $\tilde {\tf w} = (I+\Dh)^{-1}\tf w$ driving the system.
\begin{theorem}\label{thm:robust_constraint_satisfaction}
Any controller designed from a feasible solution to the robust control problem~\eqref{eq:sls_lqr_robust} for any $0\leq \gamma,\tau<1$ will stabilize the true system.
Furthermore,
the state and input constraints will be satisfied.
\end{theorem}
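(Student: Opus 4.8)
We need to prove two things: (1) any controller from a feasible solution stabilizes the true system, and (2) the state and input constraints are satisfied on the true system under all admissible disturbances. The key objects are the robust SLS constraints involving $\gamma$ and $\tau$, and the tightened constraint functions $G_x^\tau, G_u^\tau$.

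Let me think through the proof approach.

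**Stabilization part.** This should follow from Proposition~\ref{prop:delta_perf_bound}. The feasible solution $(\widehat{\tf\Phi}_x, \widehat{\tf\Phi}_u)$ satisfies the achievability constraint on the nominal system $(\Ahat, \Bhat)$. I need to show $\|\widehat{\tf\Delta}\|_{\hinf} < 1$ where $\widehat{\tf\Delta} = [\Delta_A\ \Delta_B][\widehat{\tf\Phi}_x; \widehat{\tf\Phi}_u]$. The constraint $\sqrt{2}\|[\eps_{A,2}\widehat{\tf\Phi}_x; \eps_{B,2}\widehat{\tf\Phi}_u]\|_{\hinf} \leq \gamma < 1$ should give this bound. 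The factor $\sqrt{2}$ and the structure $[\Delta_A\ \Delta_B]$ suggest a block-matrix/triangle-inequality argument: $\|\widehat{\tf\Delta}\|_{\hinf} \leq \|[\Delta_A\ \Delta_B]\|_2 \cdot \|[\widehat{\tf\Phi}_x; \widehat{\tf\Phi}_u]\|_{\hinf}$, and then bound $\|[\Delta_A\ \Delta_B]\|_2$ by $\sqrt{\eps_{A,2}^2 + \eps_{B,2}^2}$, combining with the $\sqrt{2}$ via a weighted inequality. Once $\|\widehat{\tf\Delta}\|_{\hinf} \leq \gamma < 1$, the small gain theorem (as in Proposition~\ref{prop:delta_perf_bound}) guarantees $(I+\widehat{\tf\Delta})^{-1}$ exists and the closed loop is stable.

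**Constraint satisfaction part — the main work.** The achieved system response on the true system is $[\widehat{\tf\Phi}_x; \widehat{\tf\Phi}_u](I+\widehat{\tf\Delta})^{-1}$, NOT the nominal $[\widehat{\tf\Phi}_x; \widehat{\tf\Phi}_u]$. So the true state/input trajectories are driven through the perturbed response. The strategy is to write $(I+\widehat{\tf\Delta})^{-1} = I - \widehat{\tf\Delta}(I+\widehat{\tf\Delta})^{-1}$, decomposing the true response into the nominal part (already controlled by $G_x, G_u$) plus an error term. For each constraint row $j$ and each time $k$, I would bound the contribution of the error term to $F_{x,j}^\top x_k$ (and similarly for inputs) over all admissible disturbances with $\|w_k\|_\infty \leq \sigma_w$ and initial condition $x_0$. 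This is where the $\ell_1/\htwo$-type duality from Proposition~\ref{prop:sls_equivalence} gets reused, now applied to the perturbation channel. The tightening term $\frac{\tau\sigma_w c_0}{1-\tau}\|F_{x,j}^\top\Phi_x[k+1:1]\|_1$ is exactly the worst-case budget reserved for this error, where $c_0 = \max(1, \|x_0\|_\infty/\sigma_w)$ folds the initial condition into the same bound as the disturbances. The $\lone$ constraint $\|[\eps_{A,\infty}\widehat{\tf\Phi}_x; \eps_{B,\infty}\widehat{\tf\Phi}_u]\|_{\lone}\leq \tau$ controls the $\ell_\infty\to\ell_\infty$ gain of the perturbation $\widehat{\tf\Delta}$, so that the Neumann-series error $\|\widehat{\tf\Delta}(I+\widehat{\tf\Delta})^{-1}\|_{\lone} \leq \frac{\tau}{1-\tau}$ by the $\lone$ small gain / geometric series bound.

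**Expected main obstacle.** The hard part is carrying out the signal-level bound showing that the worst-case perturbation contribution to $F_{x,j}^\top x_k$ is at most $\frac{\tau\sigma_w c_0}{1-\tau}\|F_{x,j}^\top\Phi_x[k+1:1]\|_1$. I would proceed by writing the true state as the nominal response convolved with an effective disturbance $\tilde{w} = (I+\widehat{\tf\Delta})^{-1}w$ (treating $x_0$ as $w_{-1}$), showing that $\|\tilde w\|_\infty \leq \frac{1}{1-\tau}\max(\sigma_w, \|x_0\|_\infty) = \frac{\sigma_w c_0}{1-\tau}$ via the $\lone$ bound on $\widehat{\tf\Delta}$, and then applying the same per-row $\ell_1$-maximization as in Proposition~\ref{prop:sls_equivalence} but now with the enlarged disturbance magnitude. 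The bookkeeping on indices ($[k:1]$ versus $[k+1:1]$, the inclusion of the initial condition term) is delicate: the extra index in $\Phi_x[k+1:1]$ accounts for the fact that the effective disturbance now also excites the $\Phi_x(k+1)x_0$ channel that was previously deterministic. The cleanest route is to verify that $G_x^\tau(\widehat{\tf\Phi}_x;k)_j \leq b_{x,j}$ upper-bounds the true worst-case value of $F_{x,j}^\top x_k$, so feasibility of the tightened constraints directly certifies true-system constraint satisfaction; the input constraints follow by the identical argument applied to $\widehat{\tf\Phi}_u$.
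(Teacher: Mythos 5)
Your proposal follows essentially the same route as the paper's proof: stability via the $\hinf$ small-gain bound on $\widehat{\tf\Delta}$ from Proposition~\ref{prop:delta_perf_bound}, then the decomposition $(I+\widehat{\tf\Delta})^{-1} = I - \widehat{\tf\Delta}(I+\widehat{\tf\Delta})^{-1}$ with the nominal part absorbed by $G_x$ and the error part bounded by $\frac{\tau}{1-\tau}\sigma_w c_0\|F_{x,j}^\top\Phi_x[k+1:1]\|_1$ using the $\lone$ constraint. One small wording correction: in your final paragraph the effective disturbance to bound is $\tilde{\tf w} = \widehat{\tf\Delta}(I+\widehat{\tf\Delta})^{-1}\tf w$ (giving $\|\tilde{\tf w}\|_\infty \leq \frac{\tau}{1-\tau}\sigma_w c_0$), not the full $(I+\widehat{\tf\Delta})^{-1}\tf w$, which would yield a $\frac{1}{1-\tau}$ factor and a mismatched, more conservative condition — the correct version is the one you state earlier via the Neumann-series bound.
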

\begin{proof}
First, note that by the system norm constraints,
\begin{align}
\begin{split}\label{eq:delta_bound}
&\|\Dh\|_{\hinf} \leq \sqrt{2} \left\| \begin{bmatrix}\eps_{A,2}\tf\Phi_x \\ \eps_{B,2}\tf\Phi_u\end{bmatrix}\right\|_{\hinf}<1~,~~\|\Dh\|_{\lone} \leq \left\| \begin{bmatrix}\eps_{A,\infty}\tf\Phi_x \\ \eps_{B,\infty}\tf\Phi_u\end{bmatrix}\right\|_{\lone}<1\:.
\end{split}
\end{align}
Then by~\eqref{eq:delta_perf_bound},
the true system trajectory will be given by the stable system responses
\[\tf x = \tf \Phi_x (I+\Dh)^{-1} \tf w,\quad \tf u = \tf \Phi_u (I+\Dh)^{-1} \tf w\:.\]
Therefore, the state constraints are satisfied as long as
\begin{align*}
b_x &\geq \max_{\tf w} F_x \tf x[k] = \max_{\tf w} F_x (\tf \Phi_x (I+\Dh)^{-1} \tf w )[k]\\
& = \max_{\tf w} F_x (\tf \Phi_x \tf w)[k] - F_x (\tf \Phi_x \Dh(I+\Dh)^{-1} \tf w)[k]\:.
\end{align*}
The first term reduces to $G_x(\Phi_x;k)$ as in the non-robust case. Because information about $\Dh$ is not known, we resort to a sufficient condition to bound the second term, letting $\tilde{\tf w}= \Dh(I+\Dh)^{-1} \tf w$,
\begin{align*}
|F_{x,j}^\top (\tf \Phi_x \tilde{\tf w})[k]|
 &\leq \|F_{x,j}^\top \Phi_x[k+1:1]\|_1 \|\tilde{\tf w}\|_{\infty}  \\
&\leq \|F_{x,j}^\top \Phi_x[k+1:1]\|_1 \frac{\|\Dh\|_{\lone}}{1-\|\Dh\|_{\lone}} \max(\sigma_w,\|x_0\|_\infty)\\
&\leq \|F_{x,j}^\top \Phi_x[k+1:1]\|_1 \frac{\tau}{1-\tau} \max(\sigma_w,\|x_0\|_\infty) \:.
\end{align*}
Consequently, a sufficient condition for satisfying state constraints is to have for $j$ indexing rows of $F_x$ and elements of $b_x$,
\begin{align*}
G_x(\tf\Phi_x;k)_j+\frac{\tau}{1-\tau}\max(\sigma_w,\|x_0\|_\infty)\|F_{x,j}^\top\Phi_x[k+1:1]\|_1 \leq b_{x,j}\:.
\end{align*}
Therefore, the constraints on $\tf\Phi_x$ imply that the state constraints are satisfied. Similar logic shows that the constraints on $\tf\Phi_u$ imply that the input constraints are satisfied.
\end{proof}
We remark on differences between the presented constraint-tightening and approaches common in the MPC literature. In both cases, the uncertainty in the dynamics  induces an enlarged disturbance process. 
Common in the MPC literature, the additive disturbance approximation assigns $\tilde w_k = w_k + \Delta_A x_k + \Delta_B u_k$, or equivalently $\tilde{\tf w} = \tf w + \Delta_A \tf x + \Delta_B \tf u$. 
Then $\|\tilde{\tf w}\|_\infty$ can be bounded using the state and input constraint sets.
Because this bound degrades as the constraint sets increase in size, this strategy can lead to counterintuitive behavior.
On the other hand, our approach writes
$\tilde{\tf w} =\tf w + \widehat{\tf\Delta}(I+\widehat{\tf\Delta})^{-1}\tf w$.
While bounding $\|\tilde{\tf w}\|_\infty$ in this setting does not depend on the constraint set, it is affected by the size of the initial condition.
Further comparison between the two approaches is presented in Appendix~\ref{sec:alternate_constraints}.

\subsection{Finite Dimensional Reduction}

To make controller synthesis tractable, we can solve a finite approximation to optimization problem~\eqref{eq:sls_lqr_robust} wherein we only optimize over the first $L$ impulse response elements of $\tf\Phi_x$ and $\tf\Phi_u$, treating them as finite impulse response (FIR) filters.  We show that in this setting, the optimization variables and constraints admit finite-dimensional representations.
We first reformulate the constraints. Starting with the affine constraint, we have for $k=1,...,L-1$
\begin{align}\label{eq:FIR_constr_dyn}
\Phi_x(1) = I,~&\Phi_x(k+1) = \Ahat \Phi_x(k)+\Bhat \Phi_u(k)\\
&V = \Ahat \Phi_x(L)+\Bhat \Phi_u(L) \nonumber\:,
\end{align}
where we will also optimize over $V$, a term which captures the tail of the system responses that we ignore in the synthesis.


Next, considering the system norm constraints, the $\hinf$ norm can be reduced to a compact SDP over $\Phi_x,\Phi_u, \gamma$ as in Theorem 5.8 of Dumitrescu~\cite{dumitrescu2007positive}, described explicitly for this setting in Appendix G.3 of Dean et al.~\cite{dean2018regret}.
For the $\lone$ norm, the constraint becomes an $\ell_\infty\to\ell_\infty$ operator norm bound,
\begin{align}
\begin{split}\label{eq:FIR_constr_inf}
\left\| \begin{bmatrix}\eps_{A,\infty}\tf\Phi_x \\ \eps_{B,\infty}\tf\Phi_u\end{bmatrix}\right\|_{\lone}
	= \left\| \begin{bmatrix}\eps_{A,\infty}\Phi_x[L:1] \\ \eps_{B,\infty}\Phi_u[L:1]\end{bmatrix}\right\|_{\!\mathrlap{\infty}} + \|V\|_\infty \leq \tau \:,
\end{split}
\end{align}
where the tail variable $V$ enters transparently.
For $k=1,\dots, L-1$, the inequality constraints on $G^\tau_x(\Phi_x;k)$ and $G^\tau_u(\Phi_u;k)$ remain. For any $k\geq L$, the expression reduces to, for each $j$,
\begin{align} \label{eq:FIR_constr_stateinput}
\begin{split}
& \Big(\sigma_w+\frac{\tau \max(\sigma_w,\|x_0\|_\infty)}{1-\tau}\Big) \|F_{x,j}^\top \Phi_x[L:1]\|_1  \leq b_{x,j},\\& \Big(\sigma_w+\frac{\tau \max(\sigma_w,\|x_0\|_\infty)}{1-\tau}\Big) \|F_{u,j}^\top \Phi_u[L:1]\|_1  \leq b_{u,j}\:.
\end{split}
\end{align}

Therefore, the synthesis problem becomes, for any fixed $0\leq \gamma,\tau<1$,
\begin{align}
\min_{\Phi_x,\Phi_u,V} ~ &
\frac{1}{1-\gamma}\sum_{t=0}^{L} \Tr(\Phi_x(t)^\top Q \Phi_x(t) + \Phi_u(t)^\top R \Phi_u(t)) \notag\\
\mathrm{s.t.}~& \eqref{eq:FIR_constr_dyn},~\mathrm{SDP}(\Phi_x,\Phi_u, \gamma-\|V\|_2),~\eqref{eq:FIR_constr_inf},~\eqref{eq:FIR_constr_stateinput}\:,  \label{eq:sls_FIR}\\
&1\leq k\leq L-1:
G^\tau_x(\Phi_x;k) \leq b_{x},~ G^\tau_u(\Phi_u;k) \leq b_{u}.\notag
\end{align}
This is a finite dimensional SDP.
The controller given by $\tf K=\tf\Phi_u\tf\Phi_x^{-1}$ can be written in an equivalent state-space realization $(A_K, B_K, C_K, D_K)$ via Theorem 2 of Anderson et al.~\cite{anderson17}.



\section{Suboptimality Guarantees}

How much is control performance degraded by uncertainties about the dynamics? In this section, we derive a sub-optimality bound which answers this question for the constrained LQR problem. First, consider the addition of an outer minimization over $\gamma$ and $\tau$:\footnote{
	The objective $\eqref{eq:sls_lqr_robust}$ is unimodal in $\gamma,\tau$ individually, and therefore this outer minimization can be achieved by searching over the box $[0,1)\times [0,1)$. For less computational complexity, the minimization need only be over a single outer variable: $\max(\gamma,\tau)$. In this case, the sub-optimality bound will retain the same flavor, but the norm distinctions between cost and constraints will be less clear.
}
\begin{align}
\begin{split} \label{eq:sls_lqr_robust_tau}
\min_{\gamma,\tau} ~ \widehat J(\gamma,\tau) \:.
\end{split}
\end{align}
Denote the solution to the true optimal control problem as  $(\tf \Phi_x^\star  , \tf\Phi_u^\star  )$, then define $\tf K_\star =\tf \Phi^\star  _u{\tf\Phi^\star_x}^{-1}$ and $J_\star =J(A_\star , B_\star , \tf K_\star )$. Additionally, define constants related to the optimal system norm and the dynamics uncertainties:
\[ \zeta_2 = \left\| \begin{bmatrix}\eps_{A,2}\tf\Phi_x^\star \\ \eps_{B,2}\tf\Phi_u^\star\end{bmatrix}\right\|_{\hinf},~~ \zeta_\infty = \left\| \begin{bmatrix}\eps_{A,\infty}\tf\Phi_x^\star \\ \eps_{B,\infty}\tf\Phi_u^\star\end{bmatrix}\right\|_{\lone}\:. \]

Before stating the suboptimality result, we define a robust version of the optimal constrained controller.
Define the doubly robust constraint sets
\begin{align*}\bar G_x^{\zeta}(\tf\Phi_x;k)_j =  
F_{x,j}^\top \Phi(k+1)x_0 &+ \frac{\sigma_w}{1-\zeta_\infty} \|F_{x,j}^\top \Phi_x[k:1]\|_1
\\ &+ {\frac{2\zeta_\infty\max(\sigma_w, \|x_0\|_\infty)}{1-2\zeta_\infty}\|F_{x,j}^\top \Phi_x[k+1:1]\|_1}\:,
\end{align*}
and similarly for $\bar G_u^\zeta(\tf\Phi_u;k)_j$. 
Note that these constraint sets resemble the robust constraint sets with $\tau=2\zeta_\infty$ and an enlarged noise process with $\sigma_w\leftarrow\frac{\sigma_w}{1-\zeta_\infty}$.
Then the optimal robustly constrained controller is defined as
\begin{align}
(\tf\Phi^c_x, \tf\Phi^c_u) \in \argmin_{\tf\Phi_x,\tf\Phi_u\in\frac{1}{z} \RHinf} ~ & \left\|\begin{bmatrix}Q^{1/2} & \\ & R^{1/2}\end{bmatrix} \begin{bmatrix}\tf\Phi_x \\ \tf\Phi_u\end{bmatrix}\right\|_{\htwo} \label{eq:sls_lqr_rob_constraint}\\
\mathrm{s.t.}~& \begin{bmatrix} zI - \trueA & -\trueB \end{bmatrix} \begin{bmatrix}\tf\Phi_x \\ \tf\Phi_u\end{bmatrix} = I,\notag\\
&\left\| \begin{bmatrix}\eps_{A,2}\tf\Phi_x \\ \eps_{B,2}\tf\Phi_u\end{bmatrix}\right\|_{\hinf} \leq \zeta_2, ~ \left\| \begin{bmatrix}\eps_{A,\infty}\tf\Phi_x \\ \eps_{B,\infty}\tf\Phi_u\end{bmatrix}\right\|_{\lone} \leq \zeta_\infty,\notag \\
& \bar G_x^\zeta(\tf\Phi_x;k)   \leq b_x,~~\bar G_u^\zeta(\tf\Phi_u;k)\leq b_u~~\forall ~k\geq 1\:. \notag
\end{align}
with $\tf K_c=\tf\Phi_u(\tf\Phi_x^c)^{-1}$.
Note that this optimization problem designs a controller which has similar closed-loop behavior to the true optimal controller (i.e. system norms of similar size), but satisfies more stringent state and input constraints. 
The relative \emph{robustness cost gap} is defined as
\begin{align} \label{eq:gap}
M_{\zeta} = \frac{J(\trueA,\trueB,\tf K_c)-J(\trueA,\trueB,\tf K_\star)}{J(\trueA,\trueB,\tf K_\star)}\:.
\end{align}
Now we are ready to state a data independent bound on the sub-optimality of robust controllers synthesized on estimated dynamics.

\begin{theorem}\label{thm:suboptimality}
Suppose that the robust optimal constrained controller problem is feasible. 
As long as $\zeta_2 \leq \frac{1}{4\sqrt{2}}$ and
$\zeta_\infty \leq \frac{1}{2}$, we have that the cost achieved by $\Kh=\widehat{\tf \Phi}_u\widehat{\tf\Phi}_x^{-1}$ synthesized from the minimizers of~\eqref{eq:sls_lqr_robust_tau} satisfies
  \[\frac{J(\trueA,\trueB,\widehat{\tf K}) - J_\star}{J_\star} \leq4  \sqrt{2}(1+M_{\zeta}) (\eps_{A,2}+\eps_{B,2}\|\tf K_\star \|_{\hinf})\|\tf\Phi^\star_x\|_{\hinf}+M_{\zeta}\:.\]
\end{theorem} 
This result is stated in terms of quantities related to the unknown true system, with the goal of highlighting properties of systems that make them harder or easier to robustly control.
We see that the bound grows with the $\hinf$ norm of the optimal controller and closed-loop response. 
It also grows with the robustness cost gap $M_{\zeta}$.

Remark that $M_{\zeta}$ is be difficult to characterize analytically;  in general, it requires checking the boundaries of the robust polytopic constraints.
Once errors $\eps_A$ and $\eps_B$ are small enough that the optimal system response $(\Phixs,\Phius)$ satisfies the doubly robust constraints, we will have $M_{\zeta}=0$. If the optimal controller saturates its constraints, then $M_{\zeta}$ will be nonzero for any nonzero estimation errors. 
In Section~\ref{sec:experiments} (Figure~\ref{fig:tradeoff}), we numerically characterize this optimality gap for a double integrator example.


\begin{proof}
Using~\eqref{eq:delta_perf_bound} along with the norm bounds~\eqref{eq:delta_bound} and the constraints in optimization problem~\eqref{eq:sls_lqr_robust_tau},
\begin{align*}
J(\trueA,\trueB,\widehat{\tf K}) &\leq
\frac{1}{1-\widehat\gamma} J(\Ah, \Bh, \Kh) \:.
\end{align*}
Next, we will connect the optimal system response to the estimated system by constructing a feasible solution to the robust optimization problem. 
We will use the following lemma
\begin{lemma} \label{lem:feasible_soln_bound}
Under the conditions of Theorem~\ref{thm:suboptimality},
we have that the following is a feasible solution to~\eqref{eq:sls_lqr_robust_tau}
\begin{align*}
\tf{\tilde{\Phi}}_x = \tf\Phi^c_x(I-\tf\Delta)^{-1},~ \tf{\tilde{\Phi}}_u =\tf\Phi^c _u(I-\tf\Delta)^{-1},
~\tilde\gamma=\frac{\sqrt{2}\zeta_2}{1-\sqrt{2}\zeta_2},~~\tilde\tau=\frac{\zeta_\infty}{1-\zeta_\infty}\:.
\end{align*}
where we define ${\tf\Delta} := -\begin{bmatrix} \Delta_A & \Delta_B \end{bmatrix} \begin{bmatrix} \tf\Phi_x^c \\ \tf\Phi_u^c \end{bmatrix}$.
\end{lemma}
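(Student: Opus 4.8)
The plan is to verify that the proposed point $(\Phixtilde,\Phiutilde,\tilde\gamma,\tilde\tau)$ satisfies every constraint of \eqref{eq:sls_lqr_robust_tau} in turn. The construction is engineered so that right-multiplication by $(I-\tf{\Delta})^{-1}$ turns the true-system responses $(\Phixs,\Phius)$ into valid responses for the \emph{estimated} pair $(\Ah,\Bh)$ while leaving the induced controller unchanged: since $\Phiutilde=\trueK\Phixs(I-\tf{\Delta})^{-1}=\trueK\Phixtilde$, we get $\Phiutilde\Phixtilde^{-1}=\trueK$, so this feasible point merely re-expresses the optimal controller $\trueK$ as a design for $(\Ah,\Bh)$. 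For the affine constraint I would add and subtract $(\Ah,\Bh)$ and use that $(\Phixs,\Phius)$ solves the true constraint: $\begin{bmatrix} zI-\Ah & -\Bh\end{bmatrix}\begin{bmatrix}\Phixs\\\Phius\end{bmatrix}=I-\tf{\Delta}$, hence $\begin{bmatrix} zI-\Ah & -\Bh\end{bmatrix}\begin{bmatrix}\Phixtilde\\\Phiutilde\end{bmatrix}=(I-\tf{\Delta})(I-\tf{\Delta})^{-1}=I$.

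Next come the two system-norm constraints. Arguing exactly as for \eqref{eq:delta_bound} — substituting $\Phius=\trueK\Phixs$ and pulling $\Phixs$ out by submultiplicativity — bounds $\hinfnorm{\tf{\Delta}}\le\sqrt2\,\zeta_2$, $\lonenorm{\tf{\Delta}}\le\zeta_\infty$, as well as the stacked starred norms $\hinfnorm{[\eps_{A,2}\Phixs;\eps_{B,2}\Phius]}\le\zeta_2$ and $\lonenorm{[\eps_{A,\infty}\Phixs;\eps_{B,\infty}\Phius]}\le\zeta_\infty$. The hypotheses $\zeta_2\le\tfrac{1}{4\sqrt2}$ and $\zeta_\infty\le\tfrac14$ keep $\hinfnorm{\tf{\Delta}}$ and $\lonenorm{\tf{\Delta}}$ strictly below $1$, so the Neumann series gives $\hinfnorm{(I-\tf{\Delta})^{-1}}\le(1-\sqrt2\zeta_2)^{-1}$ and $\lonenorm{(I-\tf{\Delta})^{-1}}\le(1-\zeta_\infty)^{-1}$. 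As $\Phixtilde,\Phiutilde$ are just $\Phixs,\Phius$ post-multiplied by $(I-\tf{\Delta})^{-1}$, one further use of submultiplicativity reduces the two norm constraints to $\tfrac{\sqrt2\zeta_2}{1-\sqrt2\zeta_2}\le\tilde\gamma$ and $\tfrac{\zeta_\infty}{1-\zeta_\infty}\le\tilde\tau$, which hold with equality; the same thresholds give $0\le\tilde\gamma,\tilde\tau<1$.

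The state and input constraints are the substance of the lemma. I would write $\Phixtilde=\Phixs+\Phixs\tf R$ with $\tf R:=\tf{\Delta}(I-\tf{\Delta})^{-1}$, so $\lonenorm{\tf R}\le\tfrac{\zeta_\infty}{1-\zeta_\infty}=\tilde\tau$. Fix a row index $j$ and horizon $k$. The idea is to bound the excess $G^\tau_x(\Phixtilde;k)_j-G_x(\Phixs;k)_j$ and absorb it into the constraint-robustness slack, which by definition of $\margin_x$ satisfies $b_{x,j}-G_x(\Phixs;k)_j\ge\margin_x\,\sigma_w c_0\,\|F_{x,j}^\top\Phixs[k+1:1]\|_1$. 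By the triangle inequality and $c_0\ge1$, the nominal term changes by at most $\sigma_w c_0\,\|F_{x,j}^\top(\Phixs\tf R)[k+1:1]\|_1$, while the added robust term equals $\tfrac{\tilde\tau\sigma_w c_0}{1-\tilde\tau}\|F_{x,j}^\top\Phixtilde[k+1:1]\|_1$.

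The key estimate — and the step I expect to be the main obstacle — is the \emph{truncated} convolution bound $\|F_{x,j}^\top(\Phixs\tf R)[k+1:1]\|_1\le\lonenorm{\tf R}\,\|F_{x,j}^\top\Phixs[k+1:1]\|_1$. A global appeal to $\lone$ submultiplicativity will not suffice, since the margin is normalized by the horizon-$k$ truncation $\|F_{x,j}^\top\Phixs[k+1:1]\|_1$; instead I would use that $\lonenorm{\cdot}$ is the $\ell_\infty\to\ell_\infty$ induced norm on signals and that $F_{x,j}^\top\Phixs$ and $\tf R$ are causal with impulse responses supported at degrees $\ge1$, so the degree-$(\le k+1)$ coefficients of the product only involve the degree-$(\le k)$ coefficients of $F_{x,j}^\top\Phixs$ passed through $\tf R$; the operator bound $\|\tf R\tf v\|_{\ell_\infty}\le\lonenorm{\tf R}\|\tf v\|_{\ell_\infty}$ on the worst-case unit-$\ell_\infty$ input then yields the inequality. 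Granting it, $\|F_{x,j}^\top\Phixtilde[k+1:1]\|_1\le(1+\tilde\tau)\|F_{x,j}^\top\Phixs[k+1:1]\|_1$, and the total excess is at most $\tfrac{2\tilde\tau}{1-\tilde\tau}\sigma_w c_0\,\|F_{x,j}^\top\Phixs[k+1:1]\|_1$. Finally $\tilde\tau=\tfrac{\zeta_\infty}{1-\zeta_\infty}$ gives $\tfrac{2\tilde\tau}{1-\tilde\tau}=\tfrac{2\zeta_\infty}{1-2\zeta_\infty}\le4\zeta_\infty$ for $\zeta_\infty\le\tfrac14$, and $\zeta_\infty\le\margin_x/10$ then makes the excess no larger than the slack, so $G^\tau_x(\Phixtilde;k)_j\le b_{x,j}$ for all $j,k$; the input constraints are identical with $\margin_u$ replacing $\margin_x$. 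The real work is thus concentrated in the horizon-wise convolution estimate, where causality and the operator-norm interpretation of $\lone$ are what make the per-$k$ bookkeeping close.
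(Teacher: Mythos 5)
Your proposal is correct and follows essentially the same route as the paper: the same verification of the affine and norm constraints via submultiplicativity, and the same decomposition $\Phixtilde=\Phixs+\Phixs\tf\Delta(I-\tf\Delta)^{-1}$ for the state/input constraints, with your ``key estimate'' on truncated convolutions being precisely the paper's Lemma~\ref{lem:toep_bound} (proved there exactly as you sketch, by viewing $F_{x,j}^\top\Phixs[k+1:1]\Toep_{k+1}(D)$ as a row vector times a truncation of the semi-infinite Toeplitz operator and passing to the $\lone$ norm). Your constant bookkeeping is slightly tighter (an excess of $4\zeta_\infty$ versus the paper's $10\zeta_\infty$), and both close under the margin hypothesis $\zeta_\infty\leq\margin_x/10$.
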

The proof of Lemma~\ref{lem:feasible_soln_bound} follows by checking that the proposed solution satisfies all the constraints and is presented in \extendedsls.
Applying Lemma~\ref{lem:feasible_soln_bound},
\begin{align*}
\frac{1}{1-\widehat\gamma} J(\Ah, \Bh, \Kh)  \leq \frac{1}{1-\tilde\gamma}J(\Ah, \Bh, \tf K_c )\:.\end{align*}
This is true because $(\Kh,\widehat\gamma)$ is the optimal solution to~\eqref{eq:sls_lqr_robust_tau}, so objective function with feasible $(\tilde{\tf{\Phi}}_u\tilde{\tf{\Phi}}_x^{-1}=\tf K_c,\tilde\gamma)$ is an upper bound. Then we have
\begin{align*}
J(\trueA,\trueB,\widehat{\tf K}) &\leq  \frac{1}{1-\tilde\gamma} J(\Ah, \Bh, \tf K_c ) \\
&\leq  \frac{1}{1-\tilde\gamma} \frac{1}{1-\|{\tf\Delta}\|_{2}} J(A_\star , B_\star , \tf K_c ) \\
&\leq  \frac{1}{1-\frac{\sqrt{2}\zeta}{1-\sqrt{2}\zeta}} \frac{1}{1-\sqrt{2}\zeta} (1+M) J(A_\star , B_\star , \tf K_\star ) \\
&\leq \left(1+ 4\sqrt{2}\zeta_2(1+M)+M\right) J(A_\star , B_\star , \tf K_\star ) \:.
\end{align*}
The second inequality follows from an application of~\eqref{eq:delta_perf_bound} with the roles of the nominal and true systems switched. The final follows from bounding $\|\tf\Delta\|_2$ by $\sqrt{2}\zeta$ and noticing that $\frac{x}{1-x}\leq 2x$ for $0\leq x\leq\frac{1}{2}$, where we set $x=2\sqrt{2}\zeta_2$. 
Finally, we note that
\[\zeta_2 = \left\| \begin{bmatrix}\eps_{A,2}I \\ \eps_{B,2}\tf K_\star\end{bmatrix}\tf\Phi_x^\star\right\|_{\hinf}\leq (\eps_{A,2}+\eps_{B,2}\|\tf K_\star \|_{\hinf})\|\tf\Phi^\star_x\|_{\hinf} \:.\]
\end{proof}

Here, we briefly remark that a similar sub-optimality bound can be derived for the finite problem in~\eqref{eq:sls_FIR}. In short, controllers synthesized by minimizing over $\gamma$ and $\tau$
will satisfy a sub-optimality bound of the form in Theorem~\ref{thm:suboptimality} with an additional factor due to the FIR truncation. The formal statement and proof of this result are deferred to \extendedsls, {but we highlight here that the cost penalty incurred due to FIR approximation decays exponentially in the horizon $L$ over which the approximation is taken.}

\section{Learning with Control}

Finally, we connect the previous results on robust control with system estimation.
We adopt control actions that both keep the system safe and provide excitation,
\begin{align}\label{eq:control_est}
\mathbf{u} = \mathbf{K} \mathbf{x} + \mathbf{\eta}
\end{align}
where each $\eta=(\eta_0,\eta_1,\dots)$ is stochastic and $\ell_\infty$-bounded,
i.e. $\norm{\eta_k}_{\infty} \leq \sigma_\eta$. Given a trajectory sequence $\{(x_k, u_k)\}_{k=0}^{T}$, we propose to learn the
dynamics $(\trueA, \trueB)$ via least-squares regression on a trajectory of length $T$:
\begin{align}
\label{eq:ls_problem}
(\Ah, \Bh) \in \argmin_{(A,B)} \sum_{k = 0}^{T - 1} \frac{1}{2} \|Ax_k + Bu_k - x_{k + 1}\|_2^2.
\end{align}
In what follows, we will prove a statistical rate on the least-squares estimate
$(\Ah, \Bh)$ in terms of the system response and the trajectory length.

The bulk of the proof for the statistical rate comes from a general theorem regarding
linear-response time series data from Simchowitz et al.~\cite{simchowitz2018learning}.
Recently, this proof was adopted by Dean et al.~\cite{dean2018regret} to show a rate of estimation
in the setting given by \eqref{eq:control_est} when both $\eta$ and the disturbance $w$ are Gaussian distributed.
We modify the reduction given by Dean et al. to the case when the excitation and disturbance are no longer Gaussian, but
instead zero-mean and bounded. We assume that $w_k$ and $\eta_k$ are both zero-mean sequences with independent coordinates and finite fourth moments. In particular, we assume $\E_{w_k}[w_k(i)^2] \leq \sigma_w^2$,
$\E_{w_k}[ w_k(i)^4 ] \lesssim \sigma_w^4$,
$\E_{\eta_k}[\eta_k(i)^2] \leq \sigma_\eta^2$,
$\E_{\eta_k}[ \eta_k(i)^4 ] \lesssim \sigma_\eta^4$.
These assumptions are quickly verified for common distributions such as
uniform on a compact interval or over a discrete set of points.
The main estimation result is the following.

\begin{theorem}
\label{thm:estimation}
Fix a {failure probability} $\delta \in (0, 1)$.
Suppose the stochastic disturbance $\{w_k\}$ and
the input disturbance $\{\eta_k\}$ satisfy the assumptions above.
Assume for simplicity that $\sigma_\eta \leq \sigma_w$, and that
the stabilizing controller $\tf K$ achieves a
SLS response $\tf \Phi_x \in \frac{1}{z} \RHinf(C_x, \rho), \tf \Phi_u \in \frac{1}{z} \RHinf(C_u, \rho)$.
Let $C_K^2 := n C_x^2 + d C_u^2$. Then as long as the trajectory length $T$ satisfies the condition:
\begin{align}
  T &\gtrsim T_0 := (n + d)\log\bigg( \frac{dC_u^2}{\delta}  + \frac{\sigma_w^2}{\sigma_\eta^2} \frac{\rho^2 C_u^2 C_K^2}{\delta(1-\rho^2)}
      \left(1 + \norm{\trueB}_2^2 + \frac{\norm{x_0}^2_2}{\sigma_w^2 T} \right)
  \bigg) \:,\label{eq:T0}
\end{align}
we have the following bound on the least-squares estimation errors that holds with
probability at least $1-\delta$,
\begin{align*}
  &\max\{ \norm{\Delta_A}_2, \norm{\Delta_B}_2 \} \lesssim \frac{\sigma_w C_u}{\sigma_\eta} \sqrt{\frac{n+d}{T}} \sqrt{ \log\left( \frac{dC_u}{\delta} + \frac{\sigma_w}{\sigma_\eta} \frac{\rho C_u C_K}{\delta(1-\rho^2)}
      \left(1 + \norm{\trueB}_2 + \frac{\norm{x_0}_2}{\sigma_w \sqrt{T}} \right)
\right) } \:.
\end{align*}
\end{theorem}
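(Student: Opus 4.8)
The plan is to reduce the least-squares analysis to the general framework for linear-response time series of Simchowitz et al.~\cite{simchowitz2018learning}, following the reduction used by Dean et al.~\cite{dean2018regret} in the Gaussian case, but replacing every appeal to Gaussian anti-concentration with a tool that requires only the second- and fourth-moment assumptions on $w_t$ and $\eta_t$. First I would put the estimation error in closed form. Writing $z_t = \begin{bmatrix} x_t^\T & u_t^\T \end{bmatrix}^\T$ for the covariate and $\theta_\star = \begin{bmatrix} \trueA & \trueB \end{bmatrix}$, the normal equations for \eqref{eq:ls_problem} give
\[ \begin{bmatrix} \Delta_A & \Delta_B \end{bmatrix} = \left( \sum_{t=0}^{T-1} w_t z_t^\T \right) \left( \sum_{t=0}^{T-1} z_t z_t^\T \right)^{-1} \:, \]
so that controlling $\max\{\norm{\Delta_A}_2, \norm{\Delta_B}_2\}$ reduces to two tasks: (i) a high-probability lower bound on $\lambda_{\min}\big(\sum_t z_t z_t^\T\big)$, and (ii) a self-normalized upper bound on the martingale term $\sum_t z_t w_t^\T$ (note $w_t$ is independent of $z_t$, since $z_t$ is measurable with respect to $\{w_s\}_{s<t}$ and $\{\eta_s\}_{s\leq t}$).

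Next I would set up the covariate model. Under \eqref{eq:control_est} the closed loop obeys $x_{k+1} = (\trueA + \trueB\tf K)x_k + \trueB\eta_k + w_k$, so $z_t$ is a linear response to the two independent driving sequences $\{w_k\}$ and $\{\eta_k\}$ whose response operators are governed by $\tf\Phi_x, \tf\Phi_u$. The hypotheses $\tf\Phi_x \in \frac{1}{z}\RHinf(C_x,\rho)$ and $\tf\Phi_u \in \frac{1}{z}\RHinf(C_u,\rho)$ give geometric decay of the impulse response, hence a bound on $\E[z_t z_t^\T]$ in terms of $C_x, C_u, \rho$, the variances $\sigma_w^2, \sigma_\eta^2$, and the initial condition $\norm{x_0}_2$; the $(1-\rho^2)^{-1}$ and $\norm{x_0}_2^2/(\sigma_w^2 T)$ terms in $T_0$ arise from summing this geometric series. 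The bounded/finite-fourth-moment assumptions then let me upgrade this, via a matrix concentration argument, to a high-probability spectral-norm upper bound on $\frac{1}{T}\sum_t z_t z_t^\T$, which feeds the covering/log-determinant terms of the self-normalized bound.

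The main obstacle is the lower bound in task (i). In the Gaussian reduction this follows from Gaussian anti-concentration, but here I would verify a block-martingale small-ball (BMSB) condition directly. For a fixed unit direction $v = \begin{bmatrix} v_x^\T & v_u^\T \end{bmatrix}^\T$, the freshly injected excitation contributes a conditionally zero-mean, conditionally independent term to $\langle v, z_t\rangle$ over a short block of time steps (entering $u_t$ directly through $v_u^\T\eta_t$, and the state one step later through $v_x^\T\trueB\eta_{t-1}$), with conditional second moment proportional to $\sigma_\eta^2$. The key step is to convert this second-moment lower bound into an anti-concentration statement: applying the Paley--Zygmund inequality together with the assumed ratio $\E[\eta_t(i)^4] \lesssim \sigma_\eta^4 = (\E[\eta_t(i)^2])^2$ yields a constant-probability lower bound on the conditional small-ball probability $\Pr[\,\abs{\langle v, z_t\rangle} \geq c\,\sigma_\eta\,]$, uniformly in $v$. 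This is exactly the BMSB hypothesis of Simchowitz et al., and the scale $\sigma_\eta$ it produces is what appears in the denominator of the final rate; the Paley--Zygmund step itself is carried out in \extendedest.

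Finally, I would combine the two tasks. Task (ii) is handled by the standard self-normalized martingale tail bound, which applies because a bounded (hence sub-Gaussian) $w_t$ has a well-behaved moment generating function; dividing its $\sigma_w\sqrt{(n+d)\log(\cdot)}$ magnitude by the $\sqrt{T}\,\sigma_\eta$ scale from the BMSB lower bound produces the $\frac{\sigma_w C_u}{\sigma_\eta}\sqrt{(n+d)/T}$ prefactor, where the residual $C_u$ and $C_K$ dependence is tracked through the interplay of the covariance upper bound and the small-ball matrix. The burn-in requirement $T \gtrsim T_0$ is precisely what is needed for the empirical Gram matrix to concentrate so that the lower bound holds with probability at least $1-\delta$; a final union bound over the two events, together with the simplifying assumption $\sigma_\eta \leq \sigma_w$, collapses the failure probabilities and the covering/log-determinant terms into the stated logarithm, completing the proof.
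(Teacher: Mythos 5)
Your proposal is correct and follows essentially the same route as the paper: the paper likewise defers the bulk of the argument to the reduction in Proposition C.1 of Dean et al.~\cite{dean2018regret} (built on the Simchowitz et al.\ framework), and its only new ingredient is exactly the step you identify — replacing the Gaussian anti-concentration in the block-martingale small-ball condition with a Paley--Zygmund estimate under the fourth-moment assumption (Propositions~\ref{prop:paley_zygmund} and~\ref{prop:small_ball}, the latter using Rosenthal's inequality to control fourth moments of linear combinations of coordinates). The remaining details you sketch (normal equations, Gram-matrix lower bound, self-normalized martingale term) match the structure of the cited reduction.
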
 \vspace{0.5em}
The proof of this result is presented in \extendedest.
We remark on the interpretation of statistical learning bounds. A priori guarantees, like the one presented here, depend on quantities related to the underlying true system. As such, they are not directly useful when the system is unknown,\footnote{
  Statistical bounds in terms of data-dependent quantities can also be worked out; however, modern methods like bootstrapping generally provide tighter statistical guarantees~\cite{efron1992bootstrap}.
} but rather they indicate qualities of systems that make them easier or harder to estimate.
{For example,
we see that this bound on estimation errors decreases with the size of the excitation, and increases with the size of the process noise. 
The bound also increases with $C_u$, a constant which bounds the gain from disturbance to control inputs.}

{Applying our robustness results to the learning controller~\eqref{eq:control_est}, it is possible to guarantee safety \emph{during} the estimation process.
We can define an expanded noise process $\tilde w_k = \trueB \eta_k + w_k$ to synthesize $\tf K_0$ using the robust control synthesis problem~\eqref{eq:sls_lqr_robust}. 
As long as it is feasible for initial system estimates $(A_0,B_0)$, initial dynamics uncertainties $(\eps_{A}^0,\eps_{B}^0)$, $\sigma_w$ replaced with\footnote{
  Note that since the quantity $\|\trueB\|_\infty$ would not generally be known, it can be bounded by $\|B_0\|_\infty+\eps_{B,\infty}^0$.
} $\sigma_\eta\|\trueB\|_\infty+\sigma_w$, and $b_{u,j}$ replaced with $b_{u,j}-\sigma_\eta\|F_{u,j}\|_1$, then the control law $\tf u = \tf K_0\tf x + \eta$ stabilizes the true system, satisfies state and input constraints, and allows for learning at the rate given in Theorem~\ref{thm:estimation}.
}

Finally, we connect the sub-optimality result to the statistical learning bound for an end-to-end sample complexity bound on the constrained LQR problem.
Define $M_T$ to be the value of $M_\zeta$ when the definition determined by~\eqref{eq:sls_lqr_rob_constraint} has values set as
\[\zeta_\infty = \eps\sqrt{n+d} \left\| \begin{bmatrix}\tf\Phi_x^\star \\ \tf\Phi_u^\star\end{bmatrix}\right\|_{\lone},~~ \zeta_2 =\eps \left\| \begin{bmatrix}\tf\Phi_x^\star \\\tf\Phi_u^\star\end{bmatrix}\right\|_{\hinf},~~\eps\gtrsim \frac{\sigma_w C_u}{\sigma_\eta} \sqrt{\frac{n+d}{T}}\calO(\sqrt{\log{(d/\delta)}}) \:. \]

\begin{coro}\label{coro:sample_complexity}
Assume initial feasibility of the learning problem. Then for a trajectory of length
\begin{align}T \gtrsim  T_0\frac{\sigma_w^2C_u^2}{\sigma_\eta^2}
&\max\Big\{(n+d) (1+\|\tf K_\star\|_{\lone})^2\|\tf\Phi_x^\star\|_{\lone}^2, (1+\|\tf K_\star\|_{\hinf})^2\|\tf\Phi_x^\star\|_{\hinf}^2\Big\}\:,\label{eq:Tcond}\end{align}
the cost achieved by $\Kh=\widehat{\tf \Phi}_u(\widehat{\tf\Phi}_x)^{-1}$ synthesized from~\eqref{eq:sls_lqr_robust} on the least-squares estimates $\Ah,\Bh$  satisfies with probability at least $1-\delta$,
\begin{align*}
&\frac{J(\trueA,\trueB,\widehat{\tf K}) - J_\star}{J_\star} \lesssim 
\frac{\sigma_w C_u}{\sigma_\eta} \sqrt{\frac{n+d}{T}}  (1+M_T)(1+\|\tf K_\star \|_{\hinf})\|\tf\Phi^\star_x\|_{\hinf} \calO(\sqrt{\log{(d/\delta)}})+M_T
\:.
\end{align*}
\end{coro}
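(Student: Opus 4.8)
The plan is to feed the statistical estimation rate of Theorem~\ref{thm:estimation} into the deterministic suboptimality bound of Theorem~\ref{thm:suboptimality}, with the sample-size condition~\eqref{eq:Tcond} engineered so that the resulting estimation error is small enough to meet the two hypotheses $\zeta_2 \leq \tfrac{1}{4\sqrt2}$ and $\zeta_\infty \leq \min(\tfrac{\margin_x}{10}, \tfrac{\margin_u}{10}, \tfrac14)$ of the latter.

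First I would apply Theorem~\ref{thm:estimation} to obtain, with probability at least $1-\delta$, a single spectral-norm error
\begin{equation*}
\eps := \max\{\eps_{A,2}, \eps_{B,2}\} \lesssim \frac{\sigma_w C_u}{\sigma_\eta}\sqrt{\frac{n+d}{T}}\,\calO(\sqrt{\log(d/\delta)}) \:,
\end{equation*}
absorbing the remaining system-dependent quantities inside the logarithm of Theorem~\ref{thm:estimation} into the $\calO(\sqrt{\log(d/\delta)})$ factor; the hypothesis $T \gtrsim T_0$ of that theorem is subsumed by~\eqref{eq:Tcond}, since the multiplicative factor there exceeds a constant (e.g. $\hinfnorm{\Phixs}\geq 1$ and $\sigma_\eta \leq \sigma_w$). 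Since $\|M\|_\infty \leq \sqrt{n}\,\|M\|_2$ for an $n$-column matrix, the bounds on $\Delta_A \in \R^{n\times n}$ and $\Delta_B \in \R^{n\times d}$ convert to $\eps_{A,\infty}, \eps_{B,\infty} \lesssim \sqrt{n+d}\,\eps$, so that
\begin{align*}
\zeta_2 &\leq \eps\,(1 + \hinfnorm{\trueK})\,\hinfnorm{\Phixs} \:, \\
\zeta_\infty &\lesssim \sqrt{n+d}\,\eps\,(1 + \lonenorm{\trueK})\,\lonenorm{\Phixs} \:.
\end{align*}

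Next I would verify that~\eqref{eq:Tcond} forces both hypotheses. The assumption $\margin_x \leq \min(\margin_u, \tfrac52)$ gives $\tfrac{\margin_x}{10} \leq \tfrac{\margin_u}{10}$ and $\tfrac{\margin_x}{10} \leq \tfrac14$, so the binding $\zeta_\infty$ constraint is simply $\zeta_\infty \leq \tfrac{\margin_x}{10}$. Imposing $\zeta_2 \leq \tfrac{1}{4\sqrt2}$ and solving the estimation rate for $T$ produces the second branch of the max, namely $(1+\hinfnorm{\trueK})^2\hinfnorm{\Phixs}^2$; imposing $\sqrt{n+d}\,\eps\,(1+\lonenorm{\trueK})\lonenorm{\Phixs} \lesssim \margin_x$ produces the first branch. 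The extra factor of $(n+d)$ there is exactly the square of the $\sqrt{n+d}$ conversion factor; combined with the single $(n+d)$ already carried inside $T_0$ (see~\eqref{eq:T0}), this accounts for the overall quadratic dimension dependence of the margin requirement. Taking the larger of the two resulting $T$-requirements gives~\eqref{eq:Tcond}.

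Finally, with both hypotheses in force, Theorem~\ref{thm:suboptimality} yields
\begin{equation*}
\frac{J(\trueA,\trueB,\Kh) - J_\star}{J_\star} \leq 2\sqrt2\,\zeta_2 \leq 2\sqrt2\,\eps\,(1+\hinfnorm{\trueK})\,\hinfnorm{\Phixs} \:,
\end{equation*}
and substituting the estimation rate for $\eps$ gives the claimed bound, holding with probability at least $1-\delta$. I expect the main obstacle to be the bookkeeping in the middle step: one must track the $\sqrt{n+d}$ norm conversion carefully so that the margin branch acquires the stated $(n+d)/\margin_x^2$ dependence, and confirm that each branch individually dominates the bare estimation requirement $T \gtrsim T_0$.
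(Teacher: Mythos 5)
Your proposal is correct and follows exactly the route of the paper's (sketched) proof: feed the estimation rate of Theorem~\ref{thm:estimation} into Theorem~\ref{thm:suboptimality}, using the na\"ive conversions $\eps_{A,\infty}\leq\sqrt{n}\,\eps_{A,2}$, $\eps_{B,\infty}\leq\sqrt{d}\,\eps_{B,2}$ to handle the $\zeta_\infty$/margin hypothesis (which is the source of the extra $(n+d)$ in the first branch of~\eqref{eq:Tcond}), with the $\zeta_2$ hypothesis giving the second branch. Your write-up actually supplies more of the bookkeeping than the paper does, and the accounting of the dimension factors is consistent with the stated condition.
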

\begin{proof}(Sketch)
This result follows by combining the statistical guarantee in Theorem~\ref{thm:estimation} with the sub-optimality bound in Theorem~\ref{thm:suboptimality}. Note that we use the na{\"i}ve bound $\eps_{A,\infty}\leq \sqrt{n}\eps_{A,2}$ and similarly $\eps_{B,\infty} \leq \sqrt{d} \eps_{B,2}$; this results in
an extra factor of $(n+d)$ appearing in \eqref{eq:Tcond} and in the definition of $M_T$.
\end{proof}

Our final result depends both on the true system and the initial system estimates by way of the learning controller, which affects $T_0$ and constants $C_x$, $C_u$, and $\rho$. The system constraints enter through their effect on  $M_T$.

\section{Numerical Experiments} \label{sec:experiments}

We demonstrate the utility of this framework on a double integrator example. In this case, the true dynamics are given by
\[ x_{k+1} = \begin{bmatrix} 1 & 0.1 \\ 0 & 1 \end{bmatrix} x_k + \begin{bmatrix} 0\\ 1 \end{bmatrix} u_k + w_k \]
with the constraints as states bounded between $-8$ and $8$, and inputs bounded in between $-4$ and $4$. We have $\sigma_w=0.1$. Our initial estimate comes from a randomly generated initial perturbation of the true system with $\eps_{A,\infty}=\eps_{B,\infty}=0.1$. Safe controllers are generated with finite truncation length $L=15$, and for larger initial conditions, the system is warm-started with a finite-time robust controller with horizon $20$ to reduce the initial condition.

\begin{figure}
\centering
\includegraphics[width=\imagewidth]{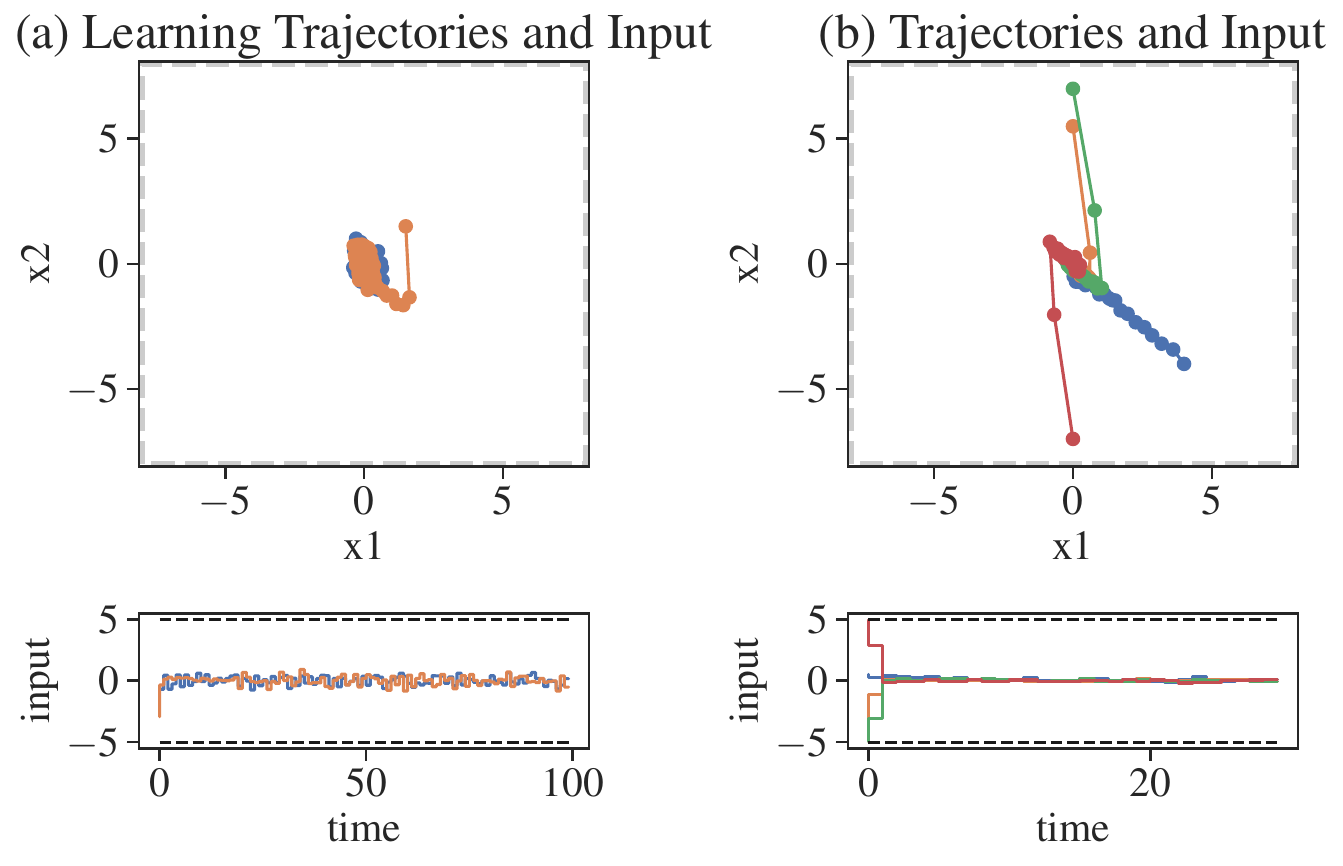}
\caption{Safe learning trajectories synthesized with coarse initial estimates (a), then robust execution with reduced model errors (b).}\label{fig:learning}
\end{figure}

Figure~\ref{fig:learning} displays safe trajectories and input sequences for several example initial conditions. In~\ref{fig:learning}a, the plotted trajectories are used for learning: the controller both regulates and excites the system ($\sigma_\eta=0.5$), and is robust to initial uncertainties.
Figure~\ref{fig:learning}b demonstrates an ability to operate closer to the constraints when there is less uncertainty: in this case, there is no added excitation ($\sigma_\eta=0$) and the system estimates are better specified ($\eps_{\infty}=0.001$), so larger initial conditions are feasible.

\begin{figure}
\centering
\includegraphics[width=\textwidth]{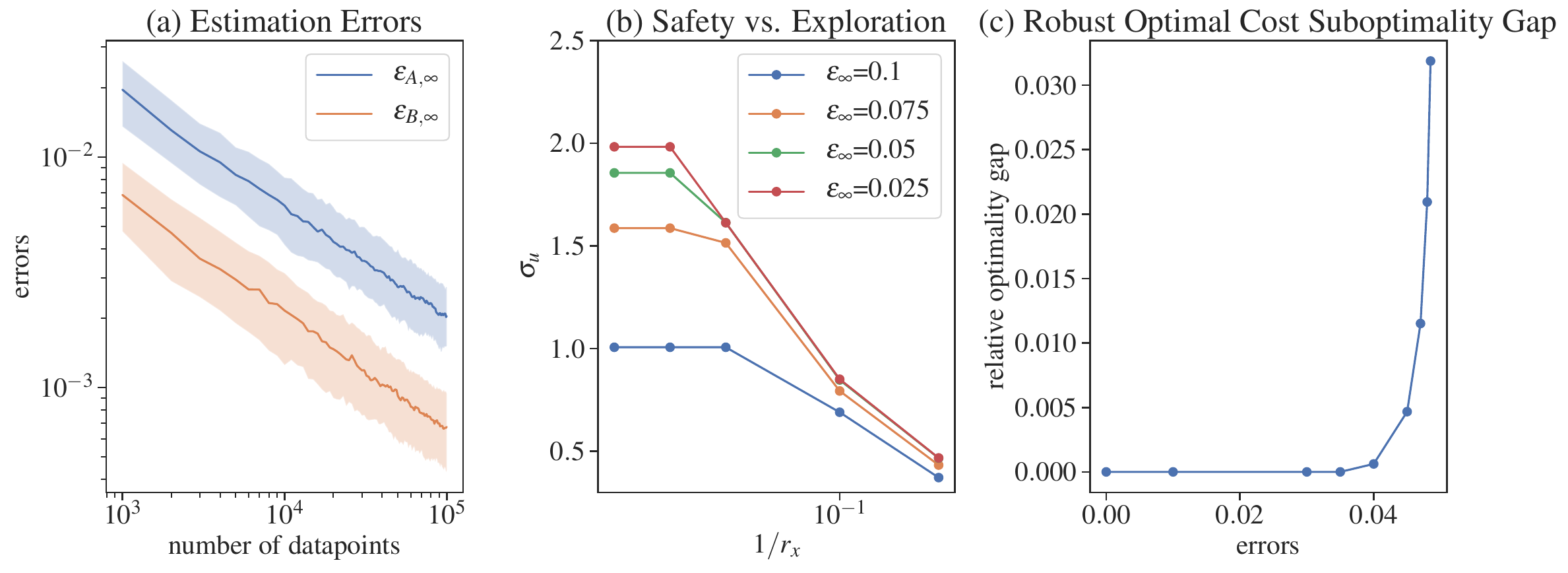}
\caption{Over time, estimation errors decrease (a). As safety requirements increase, the maximum feasible excitation decreases (b). The robust cost suboptimality gap $M_{\zeta}$ displays an abrupt transition from feasibility to near-optimality with error size (c).}\label{fig:tradeoff}
\end{figure}
Figure~\ref{fig:tradeoff}a displays the decreasing estimation errors over time, demonstrating learning. Shaded areas represent quartiles over $400$ trials. 
Figure~\ref{fig:tradeoff}b displays the trade-off between safety and exploration by showing the largest value of $\sigma_\eta$ for which the robust synthesis is feasible, given a size for the state constraint set $r_x$. Here, we leave $x_0=0$, and examine a variety of errors in the dynamics estimates. 
As the uncertainties in the dynamics decrease, higher levels of both safety and exploration are achievable. 
Finally, Figure~\ref{fig:tradeoff}c shows the value of the relative robust sub-optimality gap $M_{\zeta}$ for the given example. We see a sharp transition from infeasibility for $\eps\geq0.05$ to near-optimality for $\eps\leq0.03$. 
This indicates that the gap may be most significant as a feasibility condition for our sub-optimality guarantees to hold.

\section{Discussion}
In this paper, we propose a method for learning unknown linear systems while ensuring that they satisfy state and input constraints. By synthesizing a controller that both excites and regulates the system, we address the trade-off between safety and exploration directly. We further derive an end-to-end finite sample bound on the performance of constrained LQR controllers synthesized from collected data.

There are several directions for possible extensions of this work. 
To mitigate the conservativeness of the robust controller, tighter bounds on the uncertainty in the system response $\Dh$ could be derived for structured settings, where more than just the norm of the error is known.
To connect this work to experiment design literature, the objective in the synthesis problem~\eqref{eq:sls_lqr_robust} could be replaced with an exploration inspired cost function for the learning stage.

Alternatively, the constrained LQR problem could be cast in the setting of online learning, where one seeks to minimize cost at all times, including during learning. This would require an analysis of recursive feasibility, to understand the transition that occurs when controllers are updated based on refined system estimates. 
It would also likely require a finer analysis on the performance loss characterized by $M_\zeta$.
Finally, we remark that the exploration vs. safety trade-off is most compelling for nonlinear systems, and a linear-time-varying extension to this work may be applicable to those problems.


\section*{ACKNOWLEDGMENT}

We thank Francesco Borrelli and the members of the MPC Lab at UC Berkeley for their helpful comments and feedback.
SD is supported by an NSF Graduate Research Fellowship under Grant No. DGE 1752814.
ST is supported by a Google PhD fellowship.
BR is generously supported in part by ONR awards N00014-17-1-2191, N00014-17-1-2401, and N00014-18-1-2833, the DARPA Assured Autonomy (FA8750-18-C-0101) and Lagrange (W911NF-16-1-0552) programs, and an Amazon AWS AI Research Award.


\bibliographystyle{IEEEtran}
\bibliography{refs}

\begin{thebibliography}{10}
\providecommand{\url}[1]{#1}
\csname url@rmstyle\endcsname
\providecommand{\newblock}{\relax}
\providecommand{\bibinfo}[2]{#2}
\providecommand\BIBentrySTDinterwordspacing{\spaceskip=0pt\relax}
\providecommand\BIBentryALTinterwordstretchfactor{4}
\providecommand\BIBentryALTinterwordspacing{\spaceskip=\fontdimen2\font plus
\BIBentryALTinterwordstretchfactor\fontdimen3\font minus
  \fontdimen4\font\relax}
\providecommand\BIBforeignlanguage[2]{{%
\expandafter\ifx\csname l@#1\endcsname\relax
\typeout{** WARNING: IEEEtran.bst: No hyphenation pattern has been}%
\typeout{** loaded for the language `#1'. Using the pattern for}%
\typeout{** the default language instead.}%
\else
\language=\csname l@#1\endcsname
\fi
#2}}

\bibitem{dean2017sample}
S.~Dean, H.~Mania, N.~Matni, B.~Recht, and S.~Tu, ``On the sample complexity of
  the linear quadratic regulator,'' \emph{arXiv preprint arXiv:1710.01688},
  2017.

\bibitem{SysLevelSyn1}
Y.-S. Wang, N.~Matni, and J.~C. Doyle, ``A system level approach to controller
  synthesis,'' \emph{arXiv preprint arXiv:1610.04815}, 2016.

\bibitem{fiechter1997pac}
C.-N. Fiechter, ``{PAC} adaptive control of linear systems,'' in
  \emph{Conference on Learning Theory}, 1997.

\bibitem{abbasi2011regret}
Y.~Abbasi-Yadkori and C.~Szepesv{\'a}ri, ``Regret bounds for the adaptive
  control of linear quadratic systems,'' in \emph{Conference on Learning
  Theory}, 2011.

\bibitem{abeille17}
M.~Abeille and A.~Lazaric, ``Thompson sampling for linear-quadratic control
  problems,'' in \emph{AISTATS}, 2017.

\bibitem{simchowitz2018learning}
M.~Simchowitz, H.~Mania, S.~Tu, M.~I. Jordan, and B.~Recht, ``Learning without
  mixing: Towards a sharp analysis of linear system identification,'' in
  \emph{Conference on Learning Theory}, 2018.

\bibitem{dean2018regret}
S.~Dean, H.~Mania, N.~Matni, B.~Recht, and S.~Tu, ``Regret bounds for robust
  adaptive control of the linear quadratic regulator,'' in \emph{Neural
  Information Processing Systems}, 2018.

\bibitem{bemporad1999robust}
A.~Bemporad and M.~Morari, ``Robust model predictive control: A survey,'' in
  \emph{Robustness in identification and control}.\hskip 1em plus 0.5em minus
  0.4em\relax Springer, 1999, pp. 207--226.

\bibitem{kothare1996robust}
M.~V. Kothare, V.~Balakrishnan, and M.~Morari, ``Robust constrained model
  predictive control using linear matrix inequalities,'' \emph{Automatica},
  vol.~32, no.~10, pp. 1361--1379, 1996.

\bibitem{mayne2005robust}
D.~Q. Mayne, M.~M. Seron, and S.~Rakovi{\'c}, ``Robust model predictive control
  of constrained linear systems with bounded disturbances,'' \emph{Automatica},
  vol.~41, no.~2, pp. 219--224, 2005.

\bibitem{goulart2006optimization}
P.~J. Goulart, E.~C. Kerrigan, and J.~M. Maciejowski, ``Optimization over state
  feedback policies for robust control with constraints,'' \emph{Automatica},
  vol.~42, no.~4, pp. 523--533, 2006.

\bibitem{larsson2011mpc}
C.~A. Larsson, C.~R. Rojas, and H.~Hjalmarsson, ``{MPC} oriented experiment
  design,'' \emph{IFAC Proceedings Volumes}, vol.~44, no.~1, pp. 9966--9971,
  2011.

\bibitem{aswani2013provably}
A.~Aswani, H.~Gonzalez, S.~S. Sastry, and C.~Tomlin, ``Provably safe and robust
  learning-based model predictive control,'' \emph{Automatica}, vol.~49, no.~5,
  pp. 1216--1226, 2013.

\bibitem{tanaskovic2014adaptive}
M.~Tanaskovic, L.~Fagiano, R.~Smith, and M.~Morari, ``Adaptive receding horizon
  control for constrained {MIMO} systems,'' \emph{Automatica}, vol.~50, no.~12,
  pp. 3019--3029, 2014.

\bibitem{lorenzen2017adaptive}
M.~Lorenzen, F.~Allg{\"o}wer, and M.~Cannon, ``Adaptive model predictive
  control with robust constraint satisfaction,'' \emph{IFAC-PapersOnLine},
  vol.~50, no.~1, pp. 3313--3318, 2017.

\bibitem{blanchini99}
F.~Blanchini, ``Set invariance in control,'' \emph{Automatica}, vol.~35,
  no.~11, pp. 1747--1767, 1999.

\bibitem{berkenkamp15}
F.~Berkenkamp and A.~P. Schoellig, ``Safe and robust learning control with
  gaussian processes,'' in \emph{European Control Conference (ECC)}, 2015.

\bibitem{berkenkamp2017safe}
F.~Berkenkamp, M.~Turchetta, A.~Schoellig, and A.~Krause, ``Safe model-based
  reinforcement learning with stability guarantees,'' in \emph{Neural
  Information Processing Systems}, 2017.

\bibitem{dalal18}
G.~Dalal, K.~Dvijotham, M.~Vecerik, T.~Hester, C.~Paduraru, and Y.~Tassa,
  ``Safe exploration in continuous action spaces,'' \emph{arXiv preprint
  arXiv:1801.08757}, 2018.

\bibitem{chow18}
Y.~Chow, O.~Nachum, E.~Duenez-Guzman, and M.~Ghavamzadeh, ``A lyapunov-based
  approach to safe reinforcement learning,'' \emph{arXiv preprint
  arXiv:1805.07708}, 2018.

\bibitem{koller2018learning}
T.~Koller, F.~Berkenkamp, M.~Turchetta, and A.~Krause, ``Learning-based model
  predictive control for safe exploration and reinforcement learning,''
  \emph{arXiv preprint arXiv:1803.08287}, 2018.

\bibitem{wabersich2018linear}
K.~P. Wabersich and M.~N. Zeilinger, ``Linear model predictive safety
  certification for learning-based control,'' \emph{arXiv preprint
  arXiv:1803.08552}, 2018.

\bibitem{lu2017safe}
T.~Lu, M.~Zinkevich, C.~Boutilier, B.~Roy, and D.~Schuurmans, ``Safe
  exploration for identifying linear systems via robust optimization,''
  \emph{arXiv preprint arXiv:1711.11165}, 2017.

\bibitem{oldewurtel2008tractable}
F.~Oldewurtel, C.~N. Jones, and M.~Morari, ``A tractable approximation of
  chance constrained stochastic {MPC} based on affine disturbance feedback,''
  in \emph{47th IEEE Annual Conference on Decision and Control}, 2008.

\bibitem{farina2016stochastic}
M.~Farina, L.~Giulioni, and R.~Scattolini, ``Stochastic linear model predictive
  control with chance constraints--a review,'' \emph{Journal of Process
  Control}, vol.~44, pp. 53--67, 2016.

\bibitem{virtual}
N.~Matni, Y.-S. Wang, and J.~Anderson, ``Scalable system level synthesis for
  virtually localizable systems,'' in \emph{IEEE 56th Annual Conference on
  Decision and Control}, 2017.

\bibitem{dumitrescu2007positive}
B.~Dumitrescu, \emph{Positive Trigonometric Polynomials and Signal Processing
  Applications}.\hskip 1em plus 0.5em minus 0.4em\relax Springer, 2007, vol.
  103.

\bibitem{anderson17}
J.~Anderson and N.~Matni, ``Structured state space realizations for sls
  distributed controllers,'' in \emph{Allerton}, 2017.

\bibitem{efron1992bootstrap}
B.~Efron, ``Bootstrap methods: another look at the jackknife,'' in
  \emph{Breakthroughs in statistics}.\hskip 1em plus 0.5em minus 0.4em\relax
  Springer, 1992, pp. 569--593.

\bibitem{ibragimov02}
R.~Ibragimov and S.~Sharakhmetov, ``The exact constant in the rosenthal
  inequality for random variables with mean zero,'' \emph{Theory of Probability
  \& Its Applications}, vol.~46, no.~1, pp. 127--132, 2002.

\end{thebibliography}

\newpage
\appendix
\section{Extended Sub-optimality Results and Proofs}\label{sec:all_sls_proofs}

This section contains proofs necessary for the sub-optimality results. It further develops a sub-optimality bound for the case of FIR truncation. We begin by considering the frequency response elements of composed transfer functions. First, define, for $\tf D =\sum_{k=1}^\infty D(k)z^{-k}$,
\[\Toep_k(D) := \begin{bmatrix}
D(1) & & \\
\vdots & \ddots & &\\
D(k) &  \dots & D(1)
  \end{bmatrix} \:.\]
Then we have the following lemma.

\begin{lemma} \label{lem:freq_resp_composition}
For $\tf M = \sum_{k=1}^\infty M(k)z^{-k}$ and $\tf D =\sum_{k=1}^\infty D(k)z^{-k}$ the frequency response elements of $\tilde{\tf M} = \tf M \tf D$ are given by
\[\tilde M(k)= M[k:1]D(1:k)\:.\]
where we use the notation $D(1:k)$ for the vertical concatenation of $D(1)$ through $D(k)$.
Further, we have
\begin{align*}
\tilde M[k:1] &= M[k:1] {\Toep_k(D)}  \:.
\end{align*}
\end{lemma}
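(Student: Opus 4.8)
The plan is to treat $\tilde{\tf M} = \tf M \tf D$ as the Cauchy product of two formal power series in $z^{-1}$ and to read off its impulse-response elements by matching coefficients. Writing $\tf M \tf D = \big(\sum_{i \geq 1} M(i) z^{-i}\big)\big(\sum_{j \geq 1} D(j) z^{-j}\big) = \sum_{i,j \geq 1} M(i)D(j) z^{-(i+j)}$, I would collect all pairs $(i,j)$ contributing to the element the statement indexes by $k$. Since both factors are strictly proper, the relevant pairs satisfy $i + j = k+1$, and the contribution is the finite sum $\sum_{i=1}^{k} M(i) D(k+1-i)$. Relabeling $i \mapsto k+1-i$ turns this into $\sum_{\ell=1}^{k} M(k+1-\ell) D(\ell)$, which is exactly the block inner product of the row $M[k:1] = \begin{bmatrix} M(k) & \cdots & M(1)\end{bmatrix}$ with the stacked column $D(1:k)$. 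This establishes the first identity $\tilde M(k) = M[k:1] D(1:k)$.

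For the second identity I would assemble the whole block row $\tilde M[k:1] = \begin{bmatrix} \tilde M(k) & \cdots & \tilde M(1) \end{bmatrix}$ and compare it, block column by block column, against $M[k:1]\,\Toep_k(D)$. The $(i,j)$ block of $\Toep_k(D)$ is $D(i-j+1)$ when $i \geq j$ and $0$ otherwise, so the $j$-th block column of $M[k:1]\,\Toep_k(D)$ equals $\sum_{i=j}^{k} M(k+1-i)\,D(i-j+1)$. Reindexing with $\ell = i-j+1$ collapses this to $\sum_{\ell=1}^{k-j+1} M((k-j+1)+1-\ell) D(\ell) = \tilde M(k-j+1)$ by the first identity, and $\tilde M(k-j+1)$ is precisely the $j$-th block of $\tilde M[k:1]$. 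Matching all $j = 1, \dots, k$ gives $\tilde M[k:1] = M[k:1]\,\Toep_k(D)$.

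The computation is elementary, so the only real obstacle is bookkeeping: keeping the convolution indices straight and, in particular, respecting the offset that arises because the product of two strictly-proper responses begins one degree later than each factor. The lower-triangular block-Toeplitz structure of $\Toep_k(D)$ is doing the essential work in the second identity, since its vanishing upper-triangular blocks are exactly what allow the single fixed-length row $M[k:1]$ to generate every shorter convolution sum $\tilde M(p)$, $p \leq k$, simultaneously, so that no separate truncation argument is needed. I would double-check the two boundary cases $k=1$ and the top block column $j=1$ by hand to confirm the index alignment before presenting the general argument.
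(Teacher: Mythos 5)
Your proof is correct and follows essentially the same route as the paper's: expand the Cauchy product and match coefficients for the first identity, then compare block columns against the lower-triangular block-Toeplitz matrix for the second. If anything, you are more careful than the paper's own displayed computation, which has an off-by-one in the exponent of $z$; your observation that $M[k:1]D(1:k)$ is the coefficient of $z^{-(k+1)}$ (because both factors are strictly proper) is the correct reading of the indexing.
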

\begin{proof}
For the first statement, notice that we can write
\[\tf M \tf D = \sum_{k=1}^\infty\sum_{t=1}^{k} M(t)D(k-t)z^{-k} = \sum_{k=1}^\infty M[k:1]D(1:k)z^{-k}\:.\]
Then for the second, we have
\begin{align*}
\tilde M[k:1] =& \begin{bmatrix} M[k:1]D(1:k) &\dots & M[1]D(1) \end{bmatrix}\\ &= M[k:1] \underbrace{\begin{bmatrix}
D(1) & & \\
\vdots & \ddots & &\\
D(k) &  \dots & D(1)
  \end{bmatrix}}_{\Toep_k(D)}  \:.
\end{align*}
\end{proof}

\subsection{Proof of Lemma~\ref{lem:feasible_soln_bound}}\label{sec:sls_proofs}

\begin{proof}[Proof of Lemma~\ref{lem:feasible_soln_bound}]
The proposed feasible solution is
\begin{align*}
\tf{\tilde{\Phi}}_x = \tf\Phi^c _x(I-\tf\Delta)^{-1},~ \tf{\tilde{\Phi}}_u = \tf\Phi^c_u(I-\tf\Delta)^{-1},~
~\tilde\gamma=\frac{\sqrt{2}\zeta_2}{1-\sqrt{2}\zeta_2},~~\tilde\tau=\frac{\zeta_\infty}{1-\zeta_\infty}\:,
\end{align*}
where ${\tf\Delta} = -\begin{bmatrix} \Delta_A & \Delta_B \end{bmatrix} \begin{bmatrix} \tf\Phi_x^c \\ \tf\Phi_u^c \end{bmatrix}$.
By construction, $\Phixtilde$ and $\Phiutilde$ satisfy the equality constraints. 
Next, notice that 
\[\|\tf\Delta\|_{\hinf} = \left\|\begin{bmatrix} \frac{1}{\eps_{A,2}}\Delta_A & \frac{1}{\eps_{B,2}}\Delta_B \end{bmatrix} \begin{bmatrix} \eps_{A,2}\tf\Phi_x^c \\ \eps_{B,2}\tf\Phi_u^c \end{bmatrix}\right\|_{\hinf} \leq \sqrt{2}\zeta_2\:,\]
due to the sub-multiplicative property of the $\hinf$ norm and the norm constraint in the definition of $(\tf\Phi^c_x,\tf\Phi^c_u)$. 
By similar logic $\|\tf\Delta\|_{\lone}\leq \zeta_\infty$. 

Considering the $\hinf$ norm constraint,
\begin{align*}
  \sqrt{2} \left\| \begin{bmatrix}\eps_{A,2} \Phixtilde \\ \eps_{B,2}\Phiutilde\end{bmatrix}\right\|_{\hinf} & = \sqrt{2}\left\| \begin{bmatrix}\eps_{A,2} \tf\Phi^c_x \\ \eps_{B,2}\tf\Phi^c_u\end{bmatrix}(I-\tf\Delta)^{-1}\right\|_{\hinf} \leq 
  \sqrt{2}\zeta_2 \frac{1}{1-\|\tf\Delta\|_{\hinf}} \leq  \frac{\sqrt{2}\zeta_2}{1-\sqrt{2}\zeta_2} =\tilde \gamma\:,
  \end{align*}
  where the inequality follows due to the fact that $\sqrt{2}\|\tf\Delta\|_{\hinf}\leq 1$.
 Then consider the $\lone$ norm constraint,
\begin{align*}
  \left\| \begin{bmatrix}\eps_{A,\infty}\Phixtilde \\ \eps_{B,\infty}\Phiutilde\end{bmatrix}\right\|_{\lone} &\leq 
  \zeta_\infty \frac{1}{1-\|\tf\Delta\|_{\lone}}  \leq \frac{\zeta_\infty }{1-\zeta_\infty} = \tilde\tau
\end{align*}
where the decomposition of the inverse is valid by assumption on the size of $\zeta_\infty$.

Then it remains to show that the robust state and input constraints are satisfied. For compactness we will write $c_0=\max(1, \frac{1}{\sigma_w}\|x_0\|_\infty)$. Recall that they are given by
\[G_x^\tau(\tilde{\tf\Phi}_x;k)_j = F_{x,j}^\top \tilde\Phi_x(k+1)x_0 +  \sigma_w \|F_{x,j}^\top \tilde\Phi_x[k:1]\|_1\:
+\frac{\tau \sigma_wc_{0}}{1-\tau}\|F_{x,j}^\top\tilde\Phi_x[k+1:1]\|_1\:.\]

Note that $\tilde{\tf\Phi}_x=\Phixc + \Phixc\tf\Delta(I-\tf\Delta)^{-1}$.
Define the frequency response elements of $\tf\Delta(I-\tf\Delta)^{-1}$ by $D(k)$. Then, using Lemma~\ref{lem:freq_resp_composition} we have
\[\tilde\Phi(k)= \Phi_x^c(k) + \Phi^c_x[k:1]D(1:k)~,~~ \tilde\Phi[k:1] = \Phi^c_x[k:1] + \Phi^c_x[k:1] {\Toep_k(D)}\:.\]

\begin{lemma} \label{lem:toep_bound}
Assume that $\zeta_\infty<1$ and let $D$ represent the frequency response of $\tf\Delta(1-\tf\Delta)^{-1}$.
Then for any vector $v$,
\[\|v^\top \Toep_k(D)\|_1 \leq \frac{\zeta_\infty}{1-\zeta_\infty}\|v\|_1\:.\]
\end{lemma}
\begin{proof}
\begin{align*}
\|v^\top \Toep_k(D)\|_1 &= \|\Toep_k(D)^\top v\|_1 \leq \|\Toep_k(D)^\top\|_{1} \|v\|_1= \|\Toep_k(D)\|_{\infty} \|v\|_1\\
& \leq \|\tf\Delta(1-\tf\Delta)^{-1}\|_{\lone}\|v\|_1 \leq \frac{\zeta_\infty}{1-\zeta_\infty}\|v\|_1\:.
\end{align*}
Above, we make use of the fact that the $\ell_1$ and $\ell_\infty$ norms are duals, and therefore $\|A\|_{\infty}=\|A^\top\|_{1}$.
The second inequality holds because $\Toep_k(D)$ is a truncation of the semi-infinite Toeplitz matrix associated with the operator $\tf\Delta(1-\tf\Delta)^{-1}$. The final decomposition is valid because $\|\tf\Delta\|_{\lone} \leq \zeta_\infty <1$.
\end{proof}
Now we are ready to consider the state constraint indexed by $j$ and $k$,
\begin{align*}
G_x^{\tilde\tau}(\tilde{\tf\Phi}_x;k)_j
&= F_{x,j}^\top  (\Phi_x^c(k+1) + \Phi^c_x[k+1:1]D(1:k+1)) x_0 \\
&+  \sigma_w \|F_{x,j}^\top \Phi^c_x[k:1] (I+ \Toep_k(D) )\|_1\\
&+\frac{\tilde\tau}{1-\tilde\tau}\sigma_wc_{0}\|F_{x,j}^\top \Phi^c_x[k+1:1](I+ \Toep_{k+1}(D))\|_1
\end{align*}
Considering each term individually,
\begin{align*}
F_{x,j}^\top  (\Phi_x^c(k+1) &+ \Phi^c_x[k+1:1]D(1:k+1)) x_0\\
 &= F_{x,j}^\top \Phi_x^c(k+1) x_0 + F_{x,j}^\top \Phi^c_x[k+1:1]D(1:k+1) x_0  \:,
 \end{align*}
Then defining $E_1$ to contain an identity in the first block and zeros elsewhere,
\begin{align*}
 F_{x,j}^\top \Phi^c_x[k+1:1]D(1:k+1) x_0 &\leq
  \|F_{x,j}^\top \Phi^c_x[k+1:1]\Toep_{k+1}(D)E_1 \|_1 \|x_0\|_\infty  \\
&\leq  \|F_{x,j}^\top \Phi^c_x[k+1:1]\|_1 \frac{\zeta_\infty}{1-\zeta_\infty} \|x_0\|_\infty\:,
\end{align*}
The first inequality is H{\"o}lder's inequality and the second by Lemma~\ref{lem:toep_bound}. Next, the second term:
\begin{align*}
\sigma_w \|F_{x,j}^\top \Phi^c_x[k:1] (I+ \Toep_k(D) )\|_1&\leq \sigma_w  \frac{1}{1-\zeta_\infty} \|F_{x,j}^\top \Phi^c_x[k:1]\|_1\:,
\end{align*}

where the inequality holds by Lemma~\ref{lem:toep_bound}. Finally, the last term,
\begin{align*}
\frac{\tilde\tau}{1-\tilde\tau}\sigma_wc_{0}\|F_{x,j}^\top \Phi^c_x[k+1:1]&(I+ \Toep_{k+1}(D))\|_1 \leq
\frac{\zeta_\infty}{1-2\zeta_\infty}\sigma_wc_{0} \frac{1}{1-\zeta_\infty}
\|F_{x,j}^\top \Phi^c_x[k+1:1]\|_1
\end{align*}
where we apply Lemma~\ref{lem:toep_bound} and plug in the definition of $\tilde \tau$:
\begin{align*}
 \frac{\tilde\tau}{1-\tilde\tau} &=
\frac{\zeta_\infty}{(1-\zeta_\infty)(1-\frac{\zeta_\infty}{1-\zeta_\infty})} = \frac{\zeta_\infty}{1-2\zeta_\infty}\:.
\end{align*}
The resulting sum is

\begin{align*}
G_x^{\tilde\tau}(\tilde{\tf\Phi}_x;k)_j
&\leq F_{x,j}^\top \Phi_x^c(k+1) x_0  + 
 \frac{\sigma_w}{1-\zeta_\infty} \|F_{x,j}^\top \Phi^c_x[k:1]\|_1\\
& + \left(\frac{\zeta_\infty}{1-\zeta_\infty} \|x_0\|_\infty +  \frac{\zeta_\infty}{1-2\zeta_\infty} \frac{ c_{0}  \sigma_w}{1-\zeta_\infty}\right)
\|F_{x,j}^\top \Phi^c_x[k+1:1]\|_1
\end{align*}
Then considering constants around the final term,
\begin{align*}
 \frac{\zeta_\infty}{1-\zeta_\infty} \|x_0\|_\infty + \frac{\zeta_\infty}{1-2\zeta_\infty}\frac{ c_{0}\sigma_w}{1-\zeta_\infty}
\leq\frac{ 1}{1-\zeta_\infty}\left(
\zeta_\infty + \frac{\zeta_\infty}{1-2\zeta_\infty}\right)  c_{0}\sigma_w
=\frac{2\zeta_\infty}{1-2\zeta_\infty}  c_{0}\sigma_w\:.
\end{align*}
Thus, we see that
$G_x^{\tilde\tau}(\tilde{\tf\Phi}_x;k)_j
\leq \bar G_x^{\zeta}(\tf\Phi^c_x;k)_j \leq b_j$
due to the constraints on $\tf\Phi^c_x$. A similar computation with the input constraints shows the same result. Therefore, the proposed solution is feasible.
\end{proof}

\subsection{Finite Dimensional Sub-optimality}
\newcommand{\Dhat}{\widehat{\tf{\Delta}}}
We can recover sub-optimality guarantees in the case that we optimize over only a finite set of system response variables. Define the truncated responses $\tf\Phi_x^L = \sum_{k=1}^L \Phi_x(t) z^{-k}$ and similarly for $\tf\Phi_u^L$, and notice that
\begin{align}\label{eq:FIR_tail}
\begin{bmatrix} zI -A & -B \end{bmatrix}  \begin{bmatrix}\tf\Phi_x \\ \tf\Phi_u\end{bmatrix} = I   \iff \begin{bmatrix} zI -A & -B \end{bmatrix}  \begin{bmatrix}\tf\Phi^L_x \\ \tf\Phi^L_u\end{bmatrix} = I + \frac{1}{z^L} \Phi_x(L+1) \:. 
\end{align}
This reformulation allows for the optimization over the FIR filters $\tf\Phi_x^L$ and $\tf\Phi_u^L$, plus an additional variable that represents the tail of the true response.
{Applying this observation to the robust synthesis problem \eqref{eq:sls_lqr_robust} yields the following} finite dimensional problem:
\begin{align}
\widehat J_L(\gamma,\tau):=\min_{
\substack{\tf\Phi_x^L,\tf\Phi_u^L\in\RHinf\\V}} ~ &\frac{1}{1-\gamma} J(\Ah, \Bh, \tf K) \label{eq:sls_lqr_robust_FIR} \\
\mathrm{s.t.}~& \begin{bmatrix} zI - \Ahat & -\Bhat \end{bmatrix} \begin{bmatrix}\tf\Phi_x^L \\ \tf\Phi_u^L\end{bmatrix} = I + \frac{1}{z^L}V,\notag \\
  & \sqrt{2} \left\| \begin{bmatrix}\eps_{A,2}\tf\Phi_x^L \\ \eps_{B,2}\tf\Phi_u^L\end{bmatrix}\right\|_{\hinf} + \left\| V \right\|_2 \leq \gamma, ~~ \left\| \begin{bmatrix}\eps_{A,\infty}\tf\Phi_x^L \\ \eps_{B,\infty}\tf\Phi_u^L\end{bmatrix}\right\|_{\lone} + \|V\|_\infty \leq \tau, \notag\\
&G_x^\tau(\tf\Phi_x^L;k) \leq b_x, ~~ G_u^\tau(\tf\Phi_u^L;k) \leq b_u~~\forall ~k\:. \notag
\end{align}

{We now show that so long as the horizon $L$ is suitably large, all of the properties that hold for the solution of infinite horizon problem \eqref{eq:sls_lqr_robust} carry over to the solution of the finite dimensional approximation \eqref{eq:sls_lqr_robust_FIR}.}

\begin{proposition}\label{prop:robust_constraint_satisfaction_FIR}
Any controller designed from a feasible solution to the finite robust control problem~\eqref{eq:sls_lqr_robust_FIR} for any $0\leq \gamma,\tau<1$ stabilizes the true system and ensures that state and input constraints will be satisfied.
\end{proposition}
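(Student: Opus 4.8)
The plan is to mirror the proof of Theorem~\ref{thm:robust_constraint_satisfaction} almost verbatim, the one new ingredient being that the finite-horizon responses $\tf\Phi_x^L,\tf\Phi_u^L$ no longer satisfy the achievability constraint on $(\Ahat,\Bhat)$ exactly: by \eqref{eq:FIR_tail} they leave behind the residual $\frac{1}{z^L}V$. First I would compute the closed-loop response that the induced controller $\tf K = \tf\Phi_u^L(\tf\Phi_x^L)^{-1}$ actually achieves on the \emph{true} plant. Writing $\begin{bmatrix} zI-\trueA & -\trueB\end{bmatrix} = \begin{bmatrix} zI-\Ahat & -\Bhat\end{bmatrix} - \begin{bmatrix}\Delta_A & \Delta_B\end{bmatrix}$ and multiplying on the right by $\begin{bmatrix}\tf\Phi_x^L \\ \tf\Phi_u^L\end{bmatrix}$ gives $\begin{bmatrix} zI-\trueA & -\trueB\end{bmatrix}\begin{bmatrix}\tf\Phi_x^L \\ \tf\Phi_u^L\end{bmatrix} = I + \tf\Delta_L$, where the single \emph{combined} uncertainty $\tf\Delta_L := \frac{1}{z^L}V - \begin{bmatrix}\Delta_A & \Delta_B\end{bmatrix}\begin{bmatrix}\tf\Phi_x^L \\ \tf\Phi_u^L\end{bmatrix}$ folds together the FIR truncation tail and the model mismatch. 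Consequently the true trajectory is $\tf x = \tf\Phi_x^L(I+\tf\Delta_L)^{-1}\tf w$ and $\tf u = \tf\Phi_u^L(I+\tf\Delta_L)^{-1}\tf w$, exactly the form used in Theorem~\ref{thm:robust_constraint_satisfaction} but with $\tf\Phi_x^L,\tf\Phi_u^L$ and $\tf\Delta_L$ playing the roles of $\tf\Phi_x,\tf\Phi_u$ and $\Dh$.

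The crux is to certify that the two norm constraints of \eqref{eq:sls_lqr_robust_FIR} bound $\tf\Delta_L$ in the correct norms. Because $\frac{1}{z^L}V$ is a single impulse-response block at lag $L$, its system norms collapse to matrix norms of $V$, namely $\hinfnorm{\frac{1}{z^L}V} = \|V\|_2$ and $\lonenorm{\frac{1}{z^L}V} = \|V\|_\infty$. Combining these with the triangle inequality and the same spectral-to-operator-norm estimates that produced \eqref{eq:delta_bound} yields $\hinfnorm{\tf\Delta_L} \leq \|V\|_2 + \sqrt{2}\,\hinfnorm{\begin{bmatrix}\eps_{A,2}\tf\Phi_x^L \\ \eps_{B,2}\tf\Phi_u^L\end{bmatrix}} \leq \gamma < 1$ and $\lonenorm{\tf\Delta_L} \leq \|V\|_\infty + \lonenorm{\begin{bmatrix}\eps_{A,\infty}\tf\Phi_x^L \\ \eps_{B,\infty}\tf\Phi_u^L\end{bmatrix}} \leq \tau < 1$. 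This is exactly where the additive $\|V\|_2$ and $\|V\|_\infty$ terms in the constraints of \eqref{eq:sls_lqr_robust_FIR} earn their keep. Since $\tf\Delta_L$ is itself a strictly proper stable (FIR) transfer function with $\hinfnorm{\tf\Delta_L}<1$, the small-gain theorem makes $(I+\tf\Delta_L)^{-1}$ stable, so the true closed loop is stable, giving the stability claim just as in Proposition~\ref{prop:delta_perf_bound}.

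For constraint satisfaction I would transcribe the computation of Theorem~\ref{thm:robust_constraint_satisfaction}. Using $(I+\tf\Delta_L)^{-1} = I - \tf\Delta_L(I+\tf\Delta_L)^{-1}$, I split the true state into a nominal piece $\tf\Phi_x^L\tf w$ and a perturbation piece $\tf\Phi_x^L\tf\Delta_L(I+\tf\Delta_L)^{-1}\tf w$. Maximizing the nominal piece over admissible disturbances (with $w_{-1}=x_0$) reproduces $G_x(\tf\Phi_x^L;k)$, while the perturbation piece is controlled by H\"older's inequality together with $\lonenorm{\tf\Delta_L(I+\tf\Delta_L)^{-1}} \leq \frac{\tau}{1-\tau}$ and $\|\tf w\|_\infty \leq \sigma_w c_0$, producing precisely the extra term $\frac{\tau}{1-\tau}\sigma_w c_0\,\|F_{x,j}^\top \Phi_x^L[k+1:1]\|_1$. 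Their sum is $G_x^\tau(\tf\Phi_x^L;k)_j \leq b_{x,j}$, which is imposed by \eqref{eq:sls_lqr_robust_FIR} for all $k$; the identical argument applied to $\tf\Phi_u^L$ and $G_u^\tau$ handles the input constraints.

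The main obstacle, and the one place that genuinely requires care, is recognizing that the truncation tail $\frac{1}{z^L}V$ and the model error must be merged into the \emph{single} uncertainty $\tf\Delta_L$ rather than treated separately. Once this is done, the constraints of \eqref{eq:sls_lqr_robust_FIR} reveal themselves as the exact finite-horizon analogues of the bounds \eqref{eq:delta_bound} and of the $G_x^\tau,G_u^\tau$ conditions, and the rest is a line-by-line copy of the infinite-horizon proof. A secondary, routine technical point is verifying the two norm identities for $\frac{1}{z^L}V$ and confirming $\tf\Delta_L\in\frac{1}{z}\RHinf$, so that the small-gain inversion and the Neumann-series bound on $\tf\Delta_L(I+\tf\Delta_L)^{-1}$ are legitimate.
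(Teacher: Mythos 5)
Your proposal is correct and follows essentially the same route as the paper: it folds the FIR tail $\frac{1}{z^L}V$ and the model mismatch into a single uncertainty $\tf\Delta_L$, uses the augmented norm constraints of~\eqref{eq:sls_lqr_robust_FIR} (via the triangle inequality and the identities $\hinfnorm{\tfrac{1}{z^L}V}=\|V\|_2$, $\lonenorm{\tfrac{1}{z^L}V}=\|V\|_\infty$) to certify $\hinfnorm{\tf\Delta_L}\leq\gamma<1$ and $\lonenorm{\tf\Delta_L}\leq\tau<1$, and then invokes Proposition~\ref{prop:delta_perf_bound} and repeats the constraint-satisfaction computation of Theorem~\ref{thm:robust_constraint_satisfaction} verbatim. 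The only cosmetic difference is a sign convention on the mismatch term, which is immaterial since only norms of $\tf\Delta_L$ enter the argument.
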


\begin{proof}
Consider the affine constraint on $\tf\Phi_x^L$ and $\tf\Phi_u^L$. We can equivalently write, using the observation in~\eqref{eq:FIR_tail},
\begin{align*}
\begin{bmatrix} zI - \Ahat & -\Bhat \end{bmatrix} \begin{bmatrix}{\tf\Phi}^L_x \\ {\tf\Phi}_u^L\end{bmatrix} &= I + \frac{1}{z^L}V
\implies 
\begin{bmatrix} zI - \trueA & -\trueB \end{bmatrix} \begin{bmatrix}{\tf\Phi}^L_x \\ {\tf\Phi}_u^L\end{bmatrix} = I + \underbrace{\frac{1}{z^L}V + \widehat{\tf\Delta}}_{\Dh_L}
\end{align*}
where we define $\widehat{\tf\Delta} = \Delta_A \tf\Phi_x^L + \Delta_B \tf\Phi_u^L$. The noting that
\begin{align*}
\begin{split}
&\|\Dh_L\|_{\hinf} \leq \sqrt{2} \left\| \begin{bmatrix}\eps_{A,2}\tf\Phi_x^L \\ \eps_{B,2}\tf\Phi_u^L\end{bmatrix}\right\|_{\hinf} + \|V\|_2~,\\ &\|\Dh_L\|_{\lone} \leq \left\| \begin{bmatrix}\eps_{A,\infty}\tf\Phi_x^L \\ \eps_{B,\infty}\tf\Phi_u^L\end{bmatrix}\right\|_{\lone}+\|V\|_\infty\:,
\end{split}
\end{align*}
by~\eqref{eq:delta_perf_bound},
the system is stabilized and the true system trajectory is given by
\[\tf x = \tf \Phi_x (I+\Dh_L)^{-1} \tf w,\quad \tf u = \tf \Phi_u (I+\Dh_L)^{-1} \tf w\:.\]
The rest of the proof follows exactly as in the proof of Theorem~\ref{thm:robust_constraint_satisfaction}
\end{proof}

To bound the sub-optimality of this robust synthesis, we need to connect the optimal controller to the robust problem. 
For this feasibility result, it is necessary to understand the system response induced by putting the optimal robustly constrained controller $\tf K_c=\Phiuc(\Phixc)^{-1}$, as defined in~\eqref{eq:sls_lqr_rob_constraint}, in feedback with the estimated dynamics $(\Ahat, \Bhat)$. We denote this system responses as 
$\Phixc(I-\tf\Delta)^{-1}$.
As before, ${\tf\Delta} = -\Delta_A\tf\Phi_x^c - \Delta_B \tf\Phi_u^c $. 

We thus define constants related to the decay rate of the system responses. Let $C_\star$ and $\rho_\star$ be any constants such that $\|\Phi_x^c(k)\|_p\leq C_\star\rho_\star^k$ for $p\in\{2,\infty\}$. Note that these constants must exist since the optimal closed loop will be exponentially stable. Let $C_\Delta$ and $\rho_\Delta$ similarly be any constants that bound the norm of the frequency response elements of $(I-\tf\Delta)^{-1}$. We note that these constants can be bounded by multiples of $C_\star$ and $\rho_\star$, as worked out in Appendix G of Dean et al.~\cite{dean2018regret}.  

\begin{lemma}\label{lem:C_rho}
Suppose that for $p\in\{2,\infty\}$, $\|\Phi_x^c(k)\|_p\leq C_\star\rho_\star^k$ and $\|(I-\tf\Delta)^{-1}[k]\|_p\leq C_\Delta\rho_\Delta^k$.
Define $C=\frac{C_\star C_{\Delta}}{\log (\frac{\max(\rho_\star,\rho_\Delta)+1}{2\max(\rho_\star,\rho_\Delta)})}$ and $\rho=\frac{\max(\rho_\star,\rho_\Delta) + 1}{2}$.
Then for $p\in\{2,\infty\}$ and all $t\geq 0$,
\[\|\Phixc(I-\tf\Delta)^{-1}[k]\|_p\leq 
C
\rho^k \:.\]
\end{lemma}
We defer the proof of this lemma to the end of the section. 
Next, we redefine the optimal robustly constrained responses
$(\tf\Phi^c_x, \tf\Phi^c_u)$ in~\eqref{eq:sls_lqr_rob_constraint}
to satisfy the modified doubly robust constraint sets
\[\bar G_x^{\zeta}(\tf\Phi_x;k)_j =  
{F_{x,j}^\top \Phi(k+1)x_0 + \frac{\sigma_w}{1-\zeta_\infty} \|F_{x,j}^\top \Phi_x[k:1]\|_1}
 + {\frac{3\zeta_\infty}{1-3\zeta_\infty} \sigma_w c_{0}\|F_{x,j}^\top \Phi_x[k+1:1]\|_1}\:,\]
 Here, we have changed a factor of $2$ to a factor of $3$ in the last term to account for the finite approximation. We will define $M_{\zeta}$ as in~\eqref{eq:gap} using the new $(\tf\Phi^c_x, \tf\Phi^c_u)$.
We are now ready to state the sub-optimality result.

\begin{theorem}\label{thm:suboptimalityFIR} 
Suppose that the truncation length is such that
\[
  L \geq \frac{\log\left( C\max(\frac{1-\sqrt{2}\zeta_2}{\sqrt{2}\zeta_2},\frac{1-\zeta_\infty}{\zeta_\infty})\right)}{\log(1/\rho)} - 1 \:.
\]
As long as
$\zeta_\infty <  \frac{1}{3}$ and $\zeta_2 < \frac{1}{3\sqrt{2}}$,
we have that the cost achieved by $\Kh(L)=\widehat{\tf \Phi}_u\widehat{\tf\Phi}_x^{-1}$ synthesized from the minimizers of $\min_{\gamma,\tau}~\widehat J_L(\gamma,\tau)$
satisfies
  \[\frac{J(\trueA,\trueB,\widehat{\tf K}_L) - J_\star}{J_\star} \leq 6\sqrt{2}(1+M_{\zeta})(\eps_{A,2}\|\tf\Phi^\star_x\|_{\hinf}+\eps_{B,2}\|\tf\Phi^\star_u\|_{\hinf})+M_{\zeta} \:.\]
\end{theorem}

To prove this result, we first
construct a feasible solution to the truncated synthesis problem~\eqref{eq:sls_lqr_robust_FIR}.

\begin{lemma}\label{lem:FIR_feasibility}
Under the conditions of Theorem~\ref{thm:suboptimalityFIR},
 the following is a feasible solution to~\eqref{eq:sls_lqr_robust_FIR}:
\begin{align*}
\tilde{\tf\Phi}_x = \tf\Phi^c_x(I-\tf\Delta)^{-1}[1:L],&~~\tilde{\tf\Phi}_u = \tf\Phi^c_u(I-\tf\Delta)^{-1}[1:L],~~\tilde V=\tf\Phi^c_x(I-\tf\Delta)^{-1}[L+1],\\
&~~\tilde\gamma =  \frac{2\sqrt{2}\zeta_2}{1-\sqrt{2}\zeta_2},~~ \tilde\tau = \frac{2\zeta_\infty}{1-\zeta_\infty}.
\end{align*}
\end{lemma}

\begin{proof}[Proof of Lemma~\ref{lem:FIR_feasibility}]
First, we consider the affine constraint,
\begin{align*}
\begin{bmatrix} zI - \Ahat & -\Bhat \end{bmatrix} \begin{bmatrix}\tilde{\tf\Phi}_x \\ \tilde{\tf\Phi}_u\end{bmatrix} = I + \frac{1}{z^L}V\:.
\end{align*}
Applying the observation in~\eqref{eq:FIR_tail},

\begin{align*}
\begin{bmatrix} zI - \Ahat & -\Bhat \end{bmatrix} \begin{bmatrix}\tilde{\tf\Phi}_x \\ \tilde{\tf\Phi}_u\end{bmatrix} &= I + \frac{1}{z^L}V
\iff 
\begin{bmatrix} zI - \Ahat & -\Bhat \end{bmatrix} \begin{bmatrix} \Phixc \\ \Phiuc \end{bmatrix} (I-\tf\Delta)^{-1} = I\\
&\iff 
\Big(\begin{bmatrix} zI - \trueA & -\trueB \end{bmatrix} + \begin{bmatrix} \Delta_A & \Delta_B \end{bmatrix}\Big) \begin{bmatrix} \Phixc \\ \Phiuc \end{bmatrix} (I-\tf\Delta)^{-1} = I
\end{align*}
The right hand side of this equation is true by the definition of $\tf\Delta$ and the affine constraint on $\Phixc$ and $\Phiuc$, and thus the affine constraint on $\tilde{\tf\Phi}_x$ and $\tilde{\tf\Phi}_u$ is satisfied.

Next, we bound the norm of $\tilde V$. Using Lemma~\ref{lem:C_rho}, we have that
$$\|\Phixc(I-\tf\Delta)^{-1}[L+1]\|_p\leq C\rho^{L+1}\:.$$

Next, we consider the $\hinf$ norm constraint
\begin{align*}
\sqrt{2} \left\| \begin{bmatrix}\eps_{A,2}\Phixtilde \\ \eps_{B,2}\Phiutilde\end{bmatrix}\right\|_{\hinf} + \|V\|_2 &
\leq\sqrt{2} \left\| \begin{bmatrix}\eps_{A,2}\Phixc\\ \eps_{B,2}\Phiuc\end{bmatrix}(I-\tf\Delta)^{-1}\right\|_{\hinf} + C\rho^{L+1}\\
&\leq \sqrt{2} \zeta_2\frac{1}{1-\|\tf\Delta\|_{\hinf}} + \frac{\sqrt{2} \zeta_2}{1-\sqrt{2}\zeta_2} \\
&\leq  \frac{2\sqrt{2} \zeta_2}{1-\sqrt{2}\zeta_2} = \tilde\gamma  \:.
\end{align*}
The first inequality follows from the bound on $\|\tilde V\|_2$ and because the proposed $\tilde{\tf \Phi}_x$ is a truncation of $\Phixc(I-\tf\Delta)^{-1}$ and similarly for $\tilde{\tf \Phi}_u$. The second follows by the sub-multiplicative property, because $\|\tf\Delta\|_{\hinf}\leq\zeta_2<1$, and due to the assumption on $L$.
Then similarly considering the $\lone$ norm constraint,
\begin{align*}
  \left\| \begin{bmatrix}\eps_{A,\infty}\Phixtilde \\ \eps_{B,\infty}\Phiutilde\end{bmatrix}\right\|_{\lone} + \|V\|_\infty
  &\leq \left\| \begin{bmatrix}\eps_{A,2}\Phixc\\ \eps_{B,2}\Phiuc\end{bmatrix}(I-\tf\Delta)^{-1}\right\|_{\lone} + C\rho^{L+1}\\
&\leq \zeta_\infty \frac{1}{1-\|\tf\Delta\|_{\lone}}+ \frac{\zeta_\infty }{1-\zeta_\infty}\\
&\leq \frac{2\zeta_\infty }{1-\zeta_\infty}= \tilde\tau \:.
\end{align*}
Then it remains only to show that the robust state and input constraints are satisfied.  
Notice that $\tilde{\tf\Phi}_x$ is a truncation of ${\Phixc}(1-\tf\Delta)^{-1}$. For $k>L$, the frequency response elements are zero, and therefore the constraints are trivially satisfied.
For for $0\leq k\leq L$, we have, by Lemma~\ref{lem:freq_resp_composition}, {using block-Toeplitz notation, that}
\[\tilde\Phi(k)= \Phi_x^c(k) + \Phi^c_x[k:1]D(1:k)\]
where we take $D(k)$ to represent the frequency response elements of $\tf\Delta(1-\tf\Delta)^{-1}$.
Then the proof of constraint satisfaction follows as in Section~\ref{sec:sls_proofs}, up to the computations with the definition of $\tilde \tau$
\begin{align*}
 \frac{\tilde\tau}{1-\tilde\tau} &=
\frac{2\zeta_\infty}{(1-\zeta_\infty)(1-\frac{\zeta_\infty}{1-\zeta_\infty})} = 
\frac{2\zeta_\infty}{1-3\zeta_\infty} 
\:.
\end{align*}
This affects the constant factors around the final term of the sum
\begin{align*}
\frac{\zeta_\infty}{1-\zeta_\infty} \|x_0\|_\infty +  
\frac{2\zeta_\infty}{1-3\zeta_\infty}  \frac{ c_{0}  \sigma_w}{1-\zeta_\infty} \leq 
\frac{1}{1-\zeta_\infty} \left( \zeta_\infty +  
\frac{2\zeta_\infty}{1-3\zeta_\infty} \right)  c_{0}  \sigma_w = \frac{3\zeta_\infty}{1-3\zeta_\infty} c_{0}  \sigma_w
\end{align*}
Thus, we see that
$G_x^{\tilde\tau}(\tilde{\tf\Phi}_x;k)_j
\leq \bar G_x^{\zeta}(\tf\Phi^c_x;k)_j \leq b_j$
due to the constraints on $\tf\Phi^c_x$. 
A similar computation with the input constraints shows the same result. Therefore, the proposed solution is feasible.
\end{proof}

We are now ready to prove the main sub-optimality result.
\begin{proof}[Proof of Theorem~\ref{thm:suboptimalityFIR}]
Recall that we denote the minimizers of $\min_{\gamma,\tau}~\eqref{eq:sls_lqr_robust_FIR}$ as $(\Phixh, \Phiuh, \widehat V, \widehat\gamma, \widehat\tau)$.
Let
\[
\Dh_L :=  \Dh+\frac{1}{z^L}\widehat V.
\]
Then we have that that
\[
  \hinfnorm{\Dh_L} \leq \sqrt{2} \bighinfnorm{\begin{bmatrix}\eps_{A,2} \Phixh \\ \eps_{B,2} \Phiuh \end{bmatrix}} + \|\widehat V\|_2 \leq \widehat\gamma,
\]
By the constraints of the optimization problem in~\eqref{eq:sls_lqr_robust_FIR}.

We now apply observation~\ref{eq:FIR_tail} and~\eqref{eq:delta_perf_bound} with $\Dh_L$
to characterize the response achieved by the FIR approximate controller $\Kh(L)$ on the true system $(\trueA,\trueB)$, giving the following:
\begin{align*}
J(\trueA,\trueB,\Kh(L)) &= \bightwonorm{\begin{bmatrix} Q^\frac{1}{2} & \\ & R^\frac{1}{2} \end{bmatrix} \begin{bmatrix}{\Phixh}\\{\Phiuh}\end{bmatrix}(I+\Dh_L)^{-1}} \\
& \leq \frac{1}{1-\widehat\gamma}\bightwonorm{\begin{bmatrix} Q^\frac{1}{2} & \\ & R^\frac{1}{2} \end{bmatrix} \begin{bmatrix}{\Phixh}\\{\Phiuh}\end{bmatrix}}.
\end{align*}
The inequality follows because $\hinfnorm{\Dh} \leq \widehat\gamma<1$. 

Denote by $(\tilde{\tf\Phi}_x,\tilde{\tf\Phi}_u,\tilde V,\tilde \gamma,\tilde \tau)$ the feasible solution constructed in Lemma \ref{lem:FIR_feasibility}.  Then,
\begin{align*}
  \frac{1}{1-\widehat\gamma}\bightwonorm{\begin{bmatrix} Q^\frac{1}{2} & \\ & R^\frac{1}{2} \end{bmatrix} \begin{bmatrix}{\Phixh}\\{\Phiuh}\end{bmatrix}}
  &\leq   \frac{1}{1-\tilde\gamma}\bightwonorm{\begin{bmatrix} Q^\frac{1}{2} & \\ & R^\frac{1}{2} \end{bmatrix} \begin{bmatrix}{\tilde{\tf\Phi}_x}\\{\tilde{\tf\Phi}_x}\end{bmatrix}}
  = \frac{1}{1-\tilde\gamma} {J_L(\Ah,\Bh,\tf K_c)} \\
 &\leq  \frac{1}{1-\tilde\gamma} {J(\Ah,\Bh,\tf K_c)} \leq \frac{1}{1-\tilde\gamma} \frac{1}{1-\|{\tf\Delta}\|_{2}} J(A_\star , B_\star , \tf K_c )\\
 &\leq  \frac{1}{1-\tilde\gamma} \frac{1}{1-\sqrt{2}\zeta_2} (1+M_\zeta) J(A_\star , B_\star , \tf K_\star ),
\end{align*}
where the first inequality follows from the optimality of  $(\Phixh,\Phiuh,\widehat V,\widehat \gamma)$, the equality and second inequality from the fact that $(\tilde{\tf\Phi}_x,\tilde{\tf\Phi}_u)$ are truncations of the response of $\tf K_c$ on $(\Ah,\Bh)$ to the first $L$ time steps. The second to last inequality follows from an application of~\eqref{eq:delta_perf_bound} with the roles of the nominal and true systems switched.
The final inequality follows by definition of the robustness cost gap and bounding $\|\tf\Delta\|_2$ by $\sqrt{2}\zeta_2$. We then simplify
\begin{align*}
\frac{1}{1-\tilde\gamma} \frac{1}{1-\sqrt{2}\zeta_2} &= \frac{1}{1-\frac{2\sqrt{2}\zeta_2}{1-\sqrt{2}\zeta_2}} \frac{1}{1-\sqrt{2}\zeta_2} = \frac{1}{1-3\sqrt{2}\zeta_2} \leq 6\sqrt{2}\zeta_2
\end{align*}

which follows for $\zeta_2<\frac{1}{6\sqrt{2}} $. Then finally,
\begin{align*}
\frac{J(\trueA,\trueB,\Kh(L)) - J_\star}{J_\star} 
&\leq 6\sqrt{2}(1+M_{\zeta})\zeta_2 + M_{\zeta} \:.
\end{align*}
\end{proof}

\begin{proof}[Proof of Lemma~\ref{lem:C_rho}]
Using Lemma~\ref{lem:freq_resp_composition} with $D(t)$ representing the frequency response elements of $(I-\tf\Delta)^{-1}$, we have that
\begin{align*}
\|\widehat\Phi_x^c(k)\|_p=\|\tf\Phi^c_x(I-\tf\Delta)^{-1}[k]\|_p
 &\leq \sum_{t=1}^{k} \|\Phi_x^\star(t)\|_p\|D(k-t)\|_p\\
& \leq \sum_{t=1}^{k} C_\star \rho_\star^t C_\Delta \rho_\Delta^{k-t} \leq C_\star C_\Delta k \max(\rho_\star,\rho_\Delta)^k\:.
\end{align*}
Then noting that
for any $x \in (0, 1)$, $\alpha > 0$, and $k \geq 1$,
\[\log (k x^k) = \log(k) + k\log(x) \leq \frac{1}{\alpha} k + \log(\alpha) + k\log(x), \]
which is true because $\log\left(\frac{k}{\alpha}\right)\leq \frac{k}{\alpha}$, we have for $p=2,\infty$
\[\|\widehat\Phi_x^c(k)\|_p \leq C_\star C_\Delta \alpha \left(e^{1/\alpha}\max(\rho_\star,\rho_\Delta)\right)^{k} \leq\frac{C_\star C_{\Delta}}{\log (\frac{\max(\rho_\star,\rho_\Delta)+1}{2\max(\rho_\star,\rho_\Delta)})}\left(\frac{\max(\rho_\star,\rho_\Delta) + 1}{2}\right)^k\:.\]
The final inequality follows from choosing $\alpha = \log\left(\frac{\max(\rho_\star,\rho_\Delta)+1}{2\max(\rho_\star,\rho_\Delta)}\right)^{-1}$.
\end{proof}

\section{Additive Disturbance Alternative}\label{sec:alternate_constraints}
In this section we discuss an alternative to the proposed robust control problem in~\eqref{eq:sls_lqr_robust} in which the constraints are tightened using a traditional additive disturbance approximation. 
We will contrast this additive reduction with the system-response reduction presented in Section~\ref{sec:robust_control}. Define the additive robust synthesis problem as
\begin{align}
\min_{\tf\Phi_x,\tf\Phi_u} ~ &\frac{1}{1-\gamma} J(\Ah, \Bh, \tf K) \label{eq:sls_lqr_robust_additive} \\
\mathrm{s.t.}~& \begin{bmatrix} zI - \Ahat & -\Bhat \end{bmatrix} \begin{bmatrix}\tf\Phi_x \\ \tf\Phi_u\end{bmatrix} = I,~~ \sqrt{2} \left\| \begin{bmatrix}\eps_{A,2}\tf\Phi_x \\ \eps_{B,2}\tf\Phi_u\end{bmatrix}\right\|_{\hinf} \leq \gamma, \notag\\
&G_x^\eps(\Phi_x;k) \leq b_x, ~~ G_u^\eps(\Phi_u;k) \leq b_u~~\forall ~k\geq 1\:. \notag
\end{align}
where $0\leq\gamma< 1$ is a fixed parameter and
\begin{align*}
\begin{split}
G_x^\eps(\tf\Phi_x;k)_j = G_x(\tf\Phi_x;k)_j +  (\eps_{A,\infty} r_x + \eps_{B,\infty} r_u) \|F_{x,j}^\top \Phi_x[k:1]\|_1\:,  \\
G_x^\eps(\tf\Phi_x;k)_j =  G_u(\tf\Phi_u;k)_j +  (\eps_{A,\infty} r_x + \eps_{B,\infty} r_u)\|F_{u,j}^\top \Phi_u[k:1]\|_1\:,
\end{split}
\end{align*}
where we define $r_x$ and $r_u$ as the $\ell_\infty$ radius of the constraint set, i.e. $r_x = \max_{ x\in \calX} \|x\|_\infty$ and similarly for $r_u$.


\begin{theorem}\label{thm:robust_constraint_satisfaction_additive}
Any controller designed from a feasible solution to the robust control problem~\eqref{eq:sls_lqr_robust_additive} for any $0\leq \gamma<1$ will stabilize the true system.
Furthermore,
the state and input constraints will be satisfied.
\end{theorem}

\begin{proof}
First, note that by the system norm constraint,
\begin{align*}
\begin{split}
&\|\Dh\|_{\hinf} \leq \sqrt{2} \left\| \begin{bmatrix}\eps_{A,2}\tf\Phi_x \\ \eps_{B,2}\tf\Phi_u\end{bmatrix}\right\|_{\hinf}<1\:,
\end{split}
\end{align*}
so the closed-loop system is stable.

Notice that the dynamics are given by
\[x_{k+1} = \Ahat x_k + \Bhat u_k + w_k + \Delta_A x_k + \Delta_B u_k\:.\]
We will make an additive disturbance approximation by defining $\tilde w_k = w_k + \Delta_A x_k + \Delta_B u_k$.
Though it is conservative, we can bound 
\begin{align*}
\|\tilde w_k\|_\infty &\leq \sigma_w + \max_{\substack{x\in\calX\\ \|\Delta_A\|_\infty \leq \eps_{A,\infty}}}\|\Delta_A x_k\|_\infty + \max_{\substack{u\in\calU\\ \|\Delta_B\|_\infty \leq \eps_{B,\infty}}}\|\Delta_B u_k\|_\infty \:.
\end{align*}
Now note that if $x_k\in\mathcal{X}$, we have
\[\max_{\substack{x\in\calX\\ \|\Delta_A\|_\infty \leq \eps_{A,\infty}}}\|\Delta_A x_k\|_\infty = \eps_{A,\infty}\cdot\max_{\substack{x\in\calX\\ \|\Delta\|_\infty \leq 1}}\|\Delta x_k\|_\infty = \eps_{A,\infty}\cdot\max_{x\in\calX}\| x_k\|_\infty \:.\]
Thus as long as state and input constraints are satisfied at time $k$, we have that $\|\tilde w_k\|_\infty \leq \sigma_w + \eps_{A,\infty}r_x + \eps_{B,\infty}r_u := \tilde\sigma_w$.

Then, notice that the constraint tightening is equivalent to replacing $\sigma_w$ in~\eqref{eq:sls_lqr} with $\sigma_w + \eps_{A,\infty}r_x + \eps_{B,\infty}r_u$. Then by Proposition~\ref{prop:sls_equivalence}, the inequality constraints $G_x^\eps$ and $G_u^\eps$ imply that the system satisfies the constraints for dynamics given by $(\Ahat,\Bhat)$ and any process noise bounded by $\tilde\sigma_w$.
Thus, we can recursively argue that state and input constraints are satisfied for all $k$ as long as they are satisfied for the initial state $x_0$.
\end{proof}

The next result establishes conditions under which this additive disturbance robust control is better than the system-response robust control.

\begin{proposition}\label{prop:additive_v_sls}
Let $\widehat \tau$ be an optimizer of~\eqref{eq:sls_lqr_robust_tau}.
The additive disturbance reduction is less conservative than the system-response based reduction if and only if
\[\max(\sigma_w, \|x_0\|_\infty)\frac{\widehat\tau}{1-\widehat\tau}> \eps_{A,\infty} r_x + \eps_{B,\infty} r_u\:.\]
\end{proposition}

From this statement, we can see that the system-response reduction suffers for large initial conditions, while the additive reduction suffers with the size of the constraint sets. For this reason, that additive disturbance reduction would not properly characterize the tradeoff between safety and exploration as plotted in Figure~\ref{fig:tradeoff}b.
However, we note that the condition in Proposition~\ref{prop:additive_v_sls} could be used as a switch in a control synthesis problem to decide which reduction to use.

\begin{proof}
Recall that $\tf w = x_0 z + \sum_{k = 0}^\infty w_k z^{-k}$.
The system-response reduction writes
 $$\begin{bmatrix}\tf x\\\tf u\end{bmatrix} = \begin{bmatrix}\Phixh\\ \Phiuh\end{bmatrix} (I+\widehat{\tf\Delta})^{-1}\tf w.$$
This expression can be viewed as expanding the disturbance by the uncertainty as
\[\tilde{\tf w}_{SLS} = (I+\widehat{\tf\Delta})^{-1}\tf w = \tf w + \widehat{\tf\Delta}(I+\widehat{\tf\Delta})^{-1}\tf w\]
In the robust constraints~\eqref{eq:sls_lqr_robust}, this additional uncertainty is bounded as
\[\|\widehat{\tf\Delta}(I+\widehat{\tf\Delta})^{-1}\tf w\|_\infty \leq \frac{\|\widehat{\tf\Delta}\|_{\lone}}{1-\|\widehat{\tf\Delta}\|_{\lone}}\max(\sigma_w,\|x_0\|_\infty)
\leq \frac{\widehat\tau}{1-\widehat\tau}\max(\sigma_w,\|x_0\|_\infty)\]

On the other hand, the additive disturbance approximation notices that
$x_{k+1} = \Ahat x_k + \Bhat u_k + w_k + \Delta_A x_k + \Delta_B u_k\:,$
and thus assigns $\tilde w_k = w_k + \Delta_A x_k + \Delta_B u_k$, i.e. 
$$\tilde{\tf w}_{add} = \tf w + \Delta_A \tf x + \Delta_B \tf u\:.$$
Then in the robust constraints~\eqref{eq:sls_lqr_robust_additive}, this additional uncertainty is bounded as
\[\|\Delta_A \tf x + \Delta_B \tf u\|_\infty \leq \eps_{A,\infty} r_x + \eps_{B,\infty} r_u\:.\]
Then the additive bound is worse than the system-response bound as stated. 
\end{proof}
Finally, we note that the sub-optimality result could be extended to controllers synthesized from minimizing~\eqref{eq:sls_lqr_robust_additive} over $\gamma$, and that the main difference would arise from the definition of the robust cost sub-optimality gap $M_\zeta$.
\section{Learning Results} \label{sec:payley_zygmund_proof}

First, we prove a simple small-ball result for random variables with finite fourth
moments.
\begin{proposition}
\label{prop:paley_zygmund}
Let $X \in \R$ be a zero-mean random variable with finite fourth moment, which satisfies the conditions
\begin{align*}
  \E[ X^4 ] \leq C (\E[X^2])^2 \:.
\end{align*}
Let $a \in \R$ be a fixed scalar and $\theta \in (0, 1)$. We have that
\begin{align*}
  \Pr\{ \abs{a + X} \geq \sqrt{\theta \E[X^2]} \} \geq (1-\theta)^2/\max\{4, 3C\} \:.
\end{align*}
\end{proposition}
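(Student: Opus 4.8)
The plan is to reduce the claim to a Paley--Zygmund inequality applied to the nonnegative random variable $Z := (a+X)^2$, and then to control the ratio $(\E Z)^2/\E[Z^2]$ using the fourth-moment hypothesis. First I would recall, or prove in one line by applying Cauchy--Schwarz to the splitting $\E Z = \E[Z\ind_{Z \le \theta \E Z}] + \E[Z\ind_{Z > \theta \E Z}] \le \theta \E Z + \sqrt{\E[Z^2]}\sqrt{\Pr\{Z > \theta \E Z\}}$, the Paley--Zygmund bound
\[ \Pr\{Z > \theta \E Z\} \ge (1-\theta)^2 \frac{(\E Z)^2}{\E[Z^2]} \:. \]
Since $X$ is zero-mean, $\E Z = a^2 + \E[X^2] \ge \E[X^2]$, so $\theta \E Z \ge \theta \E[X^2]$, and the event $\{Z > \theta\E Z\}$ is contained in $\{(a+X)^2 \ge \theta\E[X^2]\} = \{\abs{a+X} \ge \sqrt{\theta \E[X^2]}\}$. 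Thus it suffices to show the deterministic moment inequality $\E[(a+X)^4] \le \max\{4,3C\}\,(\E[(a+X)^2])^2$.

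The main work is this moment inequality. Writing $m_2 = \E[X^2]$, $m_3 = \E[X^3]$, $m_4 = \E[X^4]$ and expanding (using $\E X = 0$), the target becomes
\[ a^4 + 6a^2 m_2 + 4a m_3 + m_4 \le K(a^2 + m_2)^2 \:, \qquad K := \max\{4,3C\} \:. \]
The only term obstructing a direct comparison is the cross moment $4a m_3$, which I would bound via Cauchy--Schwarz, $\abs{m_3} = \abs{\E[X\cdot X^2]} \le \sqrt{m_2 m_4} \le \sqrt{C}\, m_2^{3/2}$, so that $4a m_3 \le 4\sqrt{C}\,\abs{a}\, m_2^{3/2}$. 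After normalizing $m_2 = 1$ (the case $m_2 = 0$ is trivial) and setting $s = \abs{a} \ge 0$, the claim reduces to showing
\[ (K-1)s^4 + (2K-6)s^2 + (K - C) - 4\sqrt{C}\, s \ge 0 \quad \text{for all } s \ge 0 \:. \]

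I expect the crux to be a short case analysis on the size of $C$, which is exactly where the constant $\max\{4,3C\}$, and the threshold $C = 4/3$ at which its two branches agree, comes from. When $C \le 4/3$ I would take $K = 4$ and discard the nonnegative quartic term, reducing to $2s^2 - 4\sqrt{C}\,s + (4 - C) \ge 0$; its discriminant $24C - 32$ is nonpositive precisely when $C \le 4/3$. When $C > 4/3$ I would take $K = 3C$ and discard $(3C-1)s^4 \ge 0$, reducing to $6(C-1)s^2 - 4\sqrt{C}\,s + 2C \ge 0$, whose discriminant $16C(4 - 3C)$ is negative for $C > 4/3$. Since Jensen gives $m_4 \ge m_2^2$, i.e. $C \ge 1$, both branches have positive leading coefficients, so the sign of the discriminant settles each quadratic. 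Chaining the resulting moment inequality back through the Paley--Zygmund bound and the event containment yields the stated estimate $(1-\theta)^2/\max\{4,3C\}$.
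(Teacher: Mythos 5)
Your proof is correct, and I checked the algebra: the reduction to $(K-1)s^4+(2K-6)s^2-4\sqrt{C}\,s+(K-C)\ge 0$ is right, and the discriminants $24C-32$ and $16C(4-3C)$ do change sign exactly at $C=4/3$, which is where the two branches of $\max\{4,3C\}$ cross. The high-level skeleton matches the paper's: both apply Paley--Zygmund to $Z=(a+X)^2$ after observing $\E[(a+X)^2]\ge \E[X^2]$, so everything comes down to the kurtosis bound $\E[(a+X)^4]\le \max\{4,3C\}\,(\E[(a+X)^2])^2$. Where you diverge is in proving that bound. The paper controls the cross term $4a\E[X^3]$ by Young's inequality ($4a m_3\le 2a^2 m_2 + 2m_3^2/m_2\le 2a^2m_2+2m_4$), which removes the odd-degree term entirely and yields the ratio $f(a)=(a^4+2a^2\mu+\mu^2)/(a^4+8a^2\mu+3\beta)$, then minimizes $f$ over $a\ge 0$ by locating its critical points. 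You instead keep the odd term, bound it by Cauchy--Schwarz as $4\sqrt{C}\,|a|\,m_2^{3/2}$, normalize, and verify nonnegativity of a quadratic in $s=|a|$ via its discriminant, with a clean case split at $C=4/3$. Your route is more elementary (no calculus, no critical-point bookkeeping) and makes the origin of the constant $\max\{4,3C\}$ more transparent; the paper's route is slightly more wasteful in the intermediate step but avoids the case analysis by packaging everything into one rational function. Both land on the same constant, so either is a valid replacement for the other. One small point worth making explicit in a final write-up: the degenerate cases $\E[X^2]=0$ and the vacuity of the hypothesis for $C<1$ (by Jensen) should be dispatched in a sentence, as you indicate.
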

\begin{proof}
First, we note that we can assume $a \geq 0$ without loss of generality,
since we can perform a change of variables $X \gets -X$.
We have that
$\E[(a+X)^4] = a^4 + 6 a^2 \E[X^2] + 4 a \E[X^3] +  \E[X^4]$.
Therefore, by the Paley-Zygmund inequality and Young's inequality, we have that
\begin{align*}
  \Pr\{ \abs{a + X} \geq \sqrt{\theta \E[X^2]} \} &\geq \Pr\{ \abs{a + X} \geq \sqrt{\theta \E[(a + X)^2] } \} \\
  &= \Pr\{ (a+X)^2 \geq \theta \E[(a+X)^2] \} \\
  &\geq (1-\theta)^2 \frac{(\E[(a+X)^2])^2}{\E[(a+X)^4]} \\
  &= (1-\theta)^2 \frac{ a^4 + 2a^2\E[X^2] + (\E[X^2])^2 }{ a^4 + 8a^2 \E[X^2] + 3\E[X^4] } \:.
\end{align*}
Now define the function $f(a)$ for $a \geq 0$ as
\begin{align*}
  f(a) = \frac{ a^4 + 2a^2 \mu + \mu^2 }{ a^4 + 8a^2 \mu + 3\beta } \:.
\end{align*}
Clearly $f(0) = \mu^2/(3\beta)$ and $f(\infty) = 1$.
Since by Jensen's inequality we know that $\mu^2 \leq \beta$, this means
$f(0) < f(\infty)$.
On the other hand,
\begin{align*}
  \frac{d}{da} f(a) = \frac{4 a \left(a^2+\mu \right) \left(\mu  \left(3 a^2-4 \mu \right)+3 \beta \right)}{\left(a^4+8 a^2 \mu +3 \beta \right)^2} \:.
\end{align*}
Assume that $\mu \neq 0$ (otherwise the claim is trivially true).
The only critical points of the function $f$ on non-negative reals is at $a=0$
and $a = \sqrt{\frac{4\mu^2 - 3\beta}{3\mu}}$ if $4\mu^2 \geq 3\beta$.
If $4\mu^2 < 3\beta$, then $a=0$ is the only critical point of $f$, so
$f(a) \geq f(0) = \mu^2/(3\beta) \geq 1/(3C)$.
On the other hand, if $4\mu^2 \geq 3\beta$ holds,
Some algebra yields that
\begin{align*}
    f(\sqrt{(4\mu^2 - 3\beta)/(3\mu)}) &= \frac{7\mu^2 - 3\beta}{16\mu^2 - 3\beta}
  \geq \inf_{\gamma \in [3,4]} \frac{7-\gamma}{16-\gamma} = 1/4 \:.
\end{align*}
The inequality above holds since $3\beta \in [3\mu^2, 4\mu^2]$.
Hence,
\begin{align*} f(a) \geq \min\{ 1/4, 1/(3C) \} = 1/\max\{4, 3C\} \:.
\end{align*}
\end{proof}

This gives us the main new technical ingredient needed to prove Theorem~\ref{thm:estimation}.
\begin{proposition}
\label{prop:small_ball}
Let $\mu \in \R^d$ and $M \in \R^{d \times d}$ be a full rank matrix.  Let $w
\in \R^d$ be a random vector such that its fourth moment is finite and
each coordinate $w_i$ is independent and zero-mean, i.e. $\E[w_i] = 0$.
Then, for any fixed $v \in \R^d$,
\begin{align*}
    \Pr_{w}\left( \abs{\ip{v}{\mu + M w}} \geq \sqrt{\lambda_{\min}(M \Sigma M^\T)/2}  \right) \geq \frac{1}{C\cdot C_w} \:,
\end{align*}
where $\Sigma = \E_{w}[ww^\T]$, $C$ is an absolute constant, and $C_w = \max_{1 \leq i \leq d} \E[w_i^4]/(\E[w_i^2])^2$.
In particular, we can take $C = 192$.
\end{proposition}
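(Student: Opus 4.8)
The plan is to reduce the $d$-dimensional small-ball statement to the scalar Paley--Zygmund bound of Proposition~\ref{prop:paley_zygmund}. Writing the linear functional as $\ip{v}{\mu + Mw} = \ip{v}{\mu} + \ip{M^\T v}{w}$, I would set $a = \ip{v}{\mu}$ (a fixed scalar) and $X = \ip{M^\T v}{w} = \sum_{i=1}^d u_i w_i$ with $u = M^\T v$. Since the coordinates $w_i$ are independent and zero-mean, $X$ is zero-mean, and Proposition~\ref{prop:paley_zygmund} applies to $a + X$ once $X$ is shown to have a controlled fourth-moment ratio. Because the right-hand threshold is the fixed quantity $\sqrt{\lambda_{\min}(M\Sigma M^\T)/2}$ (positive since $M$ is full rank and $\E[w_i^2]>0$ make $M\Sigma M^\T$ positive definite), I would take $v$ to be a unit vector, the regime in which the statement is invoked, so that $\E[X^2] = v^\T M\Sigma M^\T v \ge \lambda_{\min}(M\Sigma M^\T)$.

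The key computation is the fourth-moment bound $\E[X^4] \le \tilde{C}\,(\E[X^2])^2$ for a constant $\tilde{C}$ proportional to $C_w$. Expanding $X^4 = (\sum_i u_i w_i)^4$ and using independence together with $\E[w_i]=0$, every monomial containing an index of odd multiplicity vanishes, leaving only the ``all four equal'' terms $\sum_i u_i^4 \E[w_i^4]$ and the ``two pairs'' terms $3\sum_{i\neq k} u_i^2 u_k^2\, \E[w_i^2]\E[w_k^2]$. Comparing this against $(\E[X^2])^2 = (\sum_i u_i^2 \E[w_i^2])^2$, bounding $\E[w_i^4] \le C_w (\E[w_i^2])^2$ by the definition of $C_w$, and using $\sum_i u_i^4 (\E[w_i^2])^2 \le (\sum_i u_i^2 \E[w_i^2])^2$, I obtain $\E[X^4] \le \tilde{C}(\E[X^2])^2$ with $\tilde{C}$ an explicit absolute multiple of $C_w$. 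I expect this moment expansion to be the main (indeed the only real) obstacle: it is elementary, but must be carried out carefully to get the dependence on $C_w$ right and to keep $\tilde{C}$ a clean constant times $C_w$.

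With the fourth-moment ratio in hand, I would apply Proposition~\ref{prop:paley_zygmund} to $a + X$ with $\theta = 1/2$, obtaining $\Pr\{|a+X| \ge \sqrt{\tfrac{1}{2}\E[X^2]}\} \ge (1-\tfrac12)^2/\max\{4, 3\tilde{C}\}$. Substituting the eigenvalue lower bound $\E[X^2] \ge \lambda_{\min}(M\Sigma M^\T)$ gives $\sqrt{\tfrac12\E[X^2]} \ge \sqrt{\lambda_{\min}(M\Sigma M^\T)/2}$, so the event in the scalar bound is contained in the event of interest, yielding
\[
\Pr\{\,|\ip{v}{\mu+Mw}| \ge \sqrt{\lambda_{\min}(M\Sigma M^\T)/2}\,\} \ge \frac{1/4}{\max\{4,\,3\tilde{C}\}}.
\]
Since $C_w \ge 1$ by Jensen's inequality, $\max\{4, 3\tilde{C}\}$ is an absolute constant multiple of $C_w$, which is exactly the claimed form $1/(C\cdot C_w)$; tracking the constants with the crudest admissible bound on $\tilde{C}$ produces the stated value $C = 192$.
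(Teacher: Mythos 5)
Your proposal is correct and follows essentially the same route as the paper: decompose $\ip{v}{\mu+Mw}$ as $a+X$ with $a=\ip{v}{\mu}$ and $X=\ip{M^\T v}{w}$, verify the fourth-moment ratio condition for $X$, and invoke Proposition~\ref{prop:paley_zygmund} with $\theta=1/2$ together with $\E[X^2]=v^\T M\Sigma M^\T v\geq\lambda_{\min}(M\Sigma M^\T)$ for unit $v$. The only difference is that the paper obtains $\E[X^4]\leq 4C_w(\E[X^2])^2$ by citing Rosenthal's inequality, whereas you derive the analogous bound by direct expansion of $(\sum_i u_i w_i)^4$ using independence and zero means; both give an absolute multiple of $C_w$ and hence the claimed form of the constant.
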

\begin{proof}
Fix a $q \in \R^d$.
By Rosenthal's inequality, we have that
$\E[ \abs{\ip{q}{w}}^4 ] \leq C \cdot C_w \E[\abs{\ip{q}{w}}^2]^2$
for an absolute constant--we can take $C=4$ (see e.g. \cite{ibragimov02}).
We now apply Proposition~\ref{prop:paley_zygmund} with $a = \ip{v}{\mu}$, $X =
\ip{M^\T v}{w}$, $C = 4 C_w$, and $\theta = 1/2$. The claim now follows.
\end{proof}
With Proposition~\ref{prop:small_ball} in place, the proof of Theorem~\ref{thm:estimation}
is identical to that of Proposition C.1 in Dean et al.~\cite{dean2018regret},
except in the final step of establishing the block martingale condition:
instead of using the small-ball calculation for Gaussian distributions we replace it with
the estimate provided by Proposition~\ref{prop:small_ball}.

\end{document}